\documentclass[a4paper,12pt,reqno,twoside]{amsart}

\usepackage[utf8]{inputenc} 		% UTF8 encoding
\usepackage[T1]{fontenc} 

\usepackage{amssymb}
\usepackage{amsmath}
\numberwithin{equation}{section} % equation numbering by section
\usepackage{graphicx}
\usepackage[colorlinks,citecolor=blue,urlcolor=blue]{hyperref}
\usepackage{cite}
\usepackage[hmargin=2.5cm,vmargin=2.5cm]{geometry}
\usepackage{mathrsfs} % Script fontsz
\usepackage{ellipsis}
% figures
\usepackage{graphicx}
\usepackage{svg}

\usepackage{dirtytalk}
\usepackage{enumitem}

\usepackage{tikz}
\usetikzlibrary{arrows.meta,positioning,calc}

\addtolength\parskip {2mm}

\newcommand{\be}{\begin{eqnarray}}
	\newcommand{\ee}{\end{eqnarray}}

\newcommand{\beq}{\begin{equation}}
	\newcommand{\eeq}{\end{equation}}
\newcommand{\beqn}{\begin{equation*}}
	\newcommand{\eeqn}{\end{equation*}}

\newcommand{\round}[1]{\lfloor#1\rfloor}

%\DeclareMathAlphabet{\mathfat}{U}{bbold}{m}{n}          % Identity operator; requires amsfonts
%\newcommand{\one}{\mathfat{1}}

\newtheorem{thm}{Theorem}[section]
\newtheorem{prop}[thm]{Proposition}

\newtheorem{lem}[thm]{Lemma}

\theoremstyle{remark}

\newtheorem{remark}[thm]{Remark}

%%%%%%%%%%%mathcal

\newcommand\cD{{\mathcal D}}

\newcommand\cF{{\mathcal F}}

\newcommand\cK{{\mathcal K}}

\newcommand\cU{{\mathcal U}}

%%%%%%%%%%%%%mathbb

\newcommand\bE{{\mathbb E}}

\newcommand\bP{{\mathbb P}}

\newcommand\bR{{\mathbb R}}

\newcommand\bZ{{\mathbb Z}}

%%%%%%%%%%%%%mathrm

%%%%%%%%%%%mathfrak

%\newcommand{\bfP}{{\bf P}}
%\newcommand{\bfE}{{\bf E}}
%\newcommand{\Leb}{{\mathfrak m}}

\newcommand{\ve}{\varepsilon}

%%%%%%%%% mathbf

 % Don't mess with boldface

\begin{document}
	
	\title[Random walks in intermittent dynamical environments]{A local limit theorem for a random walk in an intermittent dynamical environment}
	
	\author[Juho Lepp\"anen]{Juho Lepp\"anen}
	
	\thanks{\textsc{Department of Mathematics, 
			Tokai University, Kanagawa, 259-1292, Japan}}
	\thanks{ \textit{E-mail address}: leppanen.juho.heikki.g@tokai.ac.jp}
	\thanks{\textit{Date}:  \today }
	\thanks{\textit{2020 MSC:} 60K37; 60F15, 37D25, 37H99, 82C41} % Suggesting these. 
	\thanks{\textit{Key words and phrases:} 
		intermittent maps, random walk in random environment, local limit theorem, 
		extended dynamical system} 
		\thanks{This research was supported by 
		JSPS via the project LEADER. The author is grateful to Mikko Stenlund for helpful discussions.}

\begin{abstract}
	We study an extended dynamical system on the non-negative real line with
	piecewise linear non-uniformly expanding local dynamics. With a uniformly distributed initial state, the distribution of successive states coincides with that of a random walk in an inhomogeneous environment. Under suitable conditions on the environment, we establish a central limit theorem and a (non-Gaussian) local limit theorem for the walk. Our approach builds on the work of Leskel\"a and Stenlund (Stochastic Process. Appl. 121(12), 2011), who analyzed a corresponding model with uniformly expanding local dynamics.
	\end{abstract}
	
	\maketitle
	
	\section{Introduction}
	
	\emph{Extended dynamical systems} are used to model complex non-equilibrium
	evolution observed in a wide range of physical phenomena, including fluid
	convection, reaction-diffusion processes, and heat conduction. 
	Although the term has no mathematically precise definition,
	statistical properties of specific models have been studied in 
	the literature of dynamical systems theory for
	several decades.
	These include coupled map lattices, which are
	discrete models of spatiotemporal chaos and have been the subject of extensive
	investigations; we refer to \cite{CF05, T23} 
	for background information and to \cite{BGK07, KL09, BP24, BS22, FGGV18, B02} for 
	results on
	statistical properties.
	Another class of examples is given by extended billiards, which describe
	transport in infinite scattering media; see, for example,
	\cite{CLS10, SLEC11, CD06, AL20}.
	Essential characteristics of extended dynamical systems include high-dimensional phase spaces
	or non-compact phase spaces with physically relevant (invariant)
	measures of infinite total mass.

	A one-dimensional extended dynamical system with strongly chaotic local dynamics 
	was analyzed in \cite{LS11}. The model can be thought of as a particle moving in an 
	inhomogeneous medium composed of a linear chain of cells.
	More precisely, the system consists of an infinite partition of the half-line 
	$\bR_+ = [0,\infty)$ into intervals $[x,x+1)$, $x \in \bZ_+$, each assigned 
	a label $\omega_x \in (0,1)$. At discrete times $n = 0,1,2,\ldots$, the position of a 
	particle moving through the cells is denoted by $u_n \in \mathbb{R}_+$. The family of labels $(\omega_x)_{x \in \bZ_+}$, 
	called an \emph{environment}, determines the local dynamics of the particle. Namely, if 
	at time $n$ the particle lies in the cell $[x,x+1)$ with index $x \in \mathbb{Z}_+$,
	i.e. $u_n \in [x,x+1)$, then at time $n + 1$ it moves to
	\begin{align}\label{eq:LS_local}
		u_{n+1} = U_{\omega_x}(u_n - x) + x,
	\end{align}
	where
	$U_{\omega_x} : [0,1) \to [0,2)$ is a piecewise linear surjective map that depends on 
	the configuration $\omega_x$ and is
	\emph{uniformly expanding}:~$\text{inf}_{z \in [0,1]} |U_{\omega_x}'(z)| > 1$.
	Choosing the initial state $u_0$ 
	uniformly 
	and randomly
	in $[0,1)$ renders $(u_n)$ into a stochastic process. 
	It was shown in \cite{LS11} that, under suitable regularity conditions on the 
	environment, $(u_n)$ satisfies a local limit theorem (LLT). 
	The form of the limit is necessarily \emph{non-Gaussian}, involving a modulating
	factor depending on the environment. This result 
	was further applied to obtain a quenched LLT for $\varphi$-mixing environments 
	satisfying appropriate moment conditions. The analysis was based on
	a reduction of the system to a one-dimensional random walk with a forbidden direction.
	
	In this work we consider the case where the local dynamics are described by
	interval maps with \emph{non-uniform} expansion. We restrict attention to
	piecewise linear maps with a countably infinite number of uniformly expanding
	branches, however with the expansion rate tending to one at the left endpoint.
	Conventional measure-preserving dynamical systems generated by such maps have been studied in \cite{W89, M93}, among others.
	
	Interval maps with neutral fixed points are often called \emph{intermittent} due to their dynamical 
	characteristics: the dynamics alternate between \emph{laminar} phases, occurring when the trajectory 
	enters a small neighborhood of a neutral fixed point, and chaotic phases, occurring due to expansion away 
	from the neutral fixed points. Such maps were introduced in the classical work of 
	Pomeau and Manneville \cite{PM80} as simple models for transitions to turbulence in convective fluids. Remarkably,
	intermittent maps exhibit a polynomial rate of correlation decay 
	\cite{LSV99, Y99} and statistical limit theorems 
	whose form depends on the strength of expansion near the neutral fixed
	points (see, e.g.,  \cite{G04, TZ06, T14, MZ15}).
	Over the past decade or so there has been substantial interest in the study of 
	extended and nonautonomous intermittent dynamical systems; works in this direction include
	\cite{BK24, KL21, BBR19, LS16, DGS24, HKRVZ22}. In these works it is observed that 
	temporal or spatial dependencies between the constituent maps play an essential role in the 
	description of 
	statistical behavior.
	
	Returning to the present study, our primary objective is to describe how
	inhomogeneities of the environment impact the long-term statistical properties
	of the intermittent extended dynamical system. Following the approach of
	\cite{LS11}, we establish a relation between the distribution of trajectories
	in the extended system and the distribution of an inhomogeneous random walk on
	$\bZ_+ = \{0,1,\ldots\}$. The tail probabilities of this random walk are
	related to the tail probabilities of the first entry times of the single-site dynamics
	into a strongly chaotic region of the state space. Such return-time statistics
	are well known to be closely connected with limit theorems in the case
	of conventional stationary dynamical systems (see \cite[Theorem 2]{Y99}). 
	We analyze statistics of the first hitting times for the random walk, and use 
	these to derive corresponding statistics for the walk.	
	In
	the case of deterministic environments, we derive regularity conditions,
	parallel to those in \cite{LS11}, under which the walk satisfies a strong law
	of large numbers, a central limit theorem, and a non-Gaussian local limit
	theorem. We verify these conditions in the case of stationary $\alpha$-mixing
	environments with suitable moment conditions, which then yield corresponding
	quenched limit theorems. We emphasize that the LLT for the walk
	is equivalent to an LLT for the intermittent extended dynamical
	system under study.
	
	There is a large body of results on central limit theorems and invariance
	principles for random walks in dynamical environments. Let us mention
	\cite{RS05, DKL08, DL08} as examples. Dolgopyat and Goldsheid \cite{DG13} proved a quenched
	LLT for ballistic one-dimensional random walks in i.i.d. environments,
	and later extended this result to walks on strips \cite{DG20}. 
	LLTs for various diffusive random walks in random environments  
	(with a focus on i.i.d. environments) were also studied in  
	\cite{BCR16, BBDS23, CD15, S13}. For measure-preserving intermittent dynamical
	systems, Gou\"{e}zel \cite{G04} proved a Gaussian LLT, recalled below in
	\eqref{eq:lclt}. In the case of Liverani--Saussol--Vaienti maps (see \eqref{eq:gen_lsv} for the definition), this theorem
	applies in the regime $\alpha < 1/2$ where
	correlations (for instance of H\"older observables) are summable. The LLT proved
	in the present paper (Theorem \ref{thm:llt}) holds under a corresponding
	restriction on the parameter range of the maps constituting the extended system.
	
	In the remainder of this section we introduce the intermittent maps that serve 
	as building blocks for the extended system, and then give a precise definition 
	of the model to be studied in the rest of the paper.
	
	\subsection{A family of intermittent maps} 
	
	Let $T : [0,1] \to [0,1]$ be defined by
	\begin{align}\label{eq:gen_lsv}
		T(x) = \begin{cases}
			x + \kappa x^{\alpha + 1}, &x \in [0, c], \\
			\frac{x - c}{1 - c}, &x \in [c,1],
		\end{cases}
	\end{align}
	where $\kappa, \alpha, c$ are parameters satisfying 
	\begin{align}\label{eq:cond_params_lsv}
	\alpha \in (0,1), \quad c + \kappa c^{\alpha + 1} = 1, \quad c \in (0,1), \quad \kappa > 0.
	\end{align}
	
	This family forms a subclass of the generalized Liverani--Saussol--Vaienti (LSV) maps studied, for example, in \cite{BL21}. Its first branch is expanding except for the neutral fixed point at the origin, while the other branches are linear and uniformly expanding. Here we restrict to the case of two branches. The classical LSV map from \cite{LSV99} is recovered by taking $\kappa = 2^\alpha$ and $c = 1/2$. 
	
	First, we recall some standard definitions and properties about the maps \eqref{eq:gen_lsv} that will be useful in the sequel.
	For all $n \ge 0$ define
	\begin{align}\label{eq:cn_lsv}
		c_n = c_n(\alpha, c, \kappa) = g^{-n}(1),
	\end{align}
	where $g = T|_{[0,c]}$.  The intervals $[c_{n+1}, c_n)$, $n \ge 1$, form a partition of $(0,c)$ 
	according to the first entry times of trajectories $T^n(x)$ 
	into the uniformly expanding region $[c,1]$. Namely,
	\begin{align}\label{eq:tau_x}
	R(x) := \inf \{  n \ge 1 \: : \: T^n(x) \in [c,1]  \}
	\end{align}
	satisfies $R |_{ [c_{n+1}, c_n) } = n$.
	The following properties are immediate from the definitions of $T$ and $c_n$:
	\begin{itemize}
		\item $c_{n+1} < c_n$ and $\lim_{n \to \infty} c_n = 0$, \smallskip 
		\item $T([c_1,1]) = [0,1]$, \smallskip 
		\item $T|_{ [ c_{i+1}, c_i ) } \in C^1$ and
		$T([c_{i+1}, c_i)) = [c_i, c_{i-1})$ for all $i \ge 1$, \smallskip 
		\item $T'(x) > 1$ for $x > 0$, and $\lim_{x \downarrow 0} T'(x) = 1$.
	\end{itemize}
	In fact, using induction as in the proof of \cite[Lemma 3.2]{LSV99}, one finds that, for $n \ge 1$,
	\begin{align}\label{eq:estim_cn}
		n^{ 1 / \alpha }c_n \le 
		\max \{
		c, \kappa^{ - 1 / \alpha } 2^{ 1/\alpha + 1 / \alpha^2 }  \}
		\le  c + c \biggl( \frac{c}{1-c} \biggr)^{1/\alpha}  
		2^{ 1/\alpha + 1 / \alpha^2 }.
	\end{align}
	Since
	$$
	c_{n-1} - c_n = T(c_n) - c_n = \kappa c_n^{\alpha + 1},
	$$
	it follows that 
	\begin{align}\label{eq:estimate_diff_cn}
		n^{ 1 / \alpha + 1 }(
		c_{n-1} - c_n) \le  (1 - c) + 
		c \biggl( \frac{c}{1-c} \biggr)^{1/\alpha}  
		2^{1 + 2/\alpha + 1 / \alpha^2 }.
	\end{align}
	For $n \ge 1$, the mean value theorem implies that
	\begin{align}\label{eq:mvt}
		T'(\xi_n) = \frac{ c_{n-1} - c_n }{ c_n - c_{n+1} } 
		\quad \text{for some $\xi_n \in (c_{n+1}, c_n)$.}
	\end{align}
	
	For the map \eqref{eq:gen_lsv}, the central limit theorem (CLT) follows from \cite{LSV99, Y99}. Specifically, let $\varphi : [0,1] \to \mathbb{R}$ be a H\"older continuous function satisfying $\int \varphi \, d\nu = 0$, where $\nu$ denotes the unique absolutely continuous invariant probability measure for $T$ (see \cite{LSV99} for the existence of such a measure).
	Then, for $\alpha < 1/2$, the partial sums  
	$
	S_n = \sum_{k=0}^{n-1} \varphi \circ T^k
	$ satisfy
	\begin{equation}\label{eq:clt_lsv}
		n^{-1/2} S_n \stackrel{\mathcal{D}}{\to} N(0, \sigma^2),
	\end{equation}  
	for some $\sigma^2 \ge 0$, where $\stackrel{\mathcal{D}}{\to}$ means convergence in distribution. Here, the sequence $(\varphi \circ T^k)$ is considered as a stochastic process on the probability space $([0,1], \nu)$. Gou\"{e}zel \cite{G05} proved a local CLT, which refines \eqref{eq:clt_lsv} by establishing convergence at the level of densities. Specifically, we have the following result.
	
	\begin{thm}[Special case of Theorem 1.2 in \cite{G05}]\label{thm:gou}
	Let $\alpha < 1/2$.
	Suppose that $\varphi$ cannot be written as 
	$\varphi = c + \psi - \psi \circ T + \lambda q$ 
	for $\lambda > 0$, $c \in \bR$, and some measurable functions $q : [0,1] \to \bZ$, 
	$\psi : [0,1] \to \bR$.
	Then, for any bounded interval $J \subset \bR$,
	and any real sequence $(k_n)$ with $k_n / \sqrt{n} \to \kappa \in \bR$,
	\begin{align}\label{eq:lclt}
		\sqrt{n} \nu\biggl( 
		S_n \in J + k_n
		\biggr) = |J| \phi_{0,  \sigma^2 }(\kappa) + o(1).
	\end{align}
	Here, $\phi_{\mu, \sigma^2}$ denotes the density of $N(\mu, \sigma^2)$:
	\begin{align}\label{eq:dens_normal}
	\phi_{\mu, \sigma^2}(z) = (2 \pi \sigma^2 )^{-1/2} 
		\exp\biggl( - \frac{(z - \mu)^2}{2  \sigma^2 }  \biggr).
	\end{align}
	\end{thm}
	
	\subsection{Model definition}
	
	\subsubsection{A piecewise linear intermittent extended dynamical system}
	Let $\tilde{c} = (c_n)_{n \ge 1}$ be a sequence of real numbers such that 
	$0 < c_{n+1} < c_n < 1$ for all $n \ge 1$ and $c_n \to 0$ as $n \to \infty$.
	Motivated by \eqref{eq:mvt}, we define a piecewise linear analog
	of the map \eqref{eq:gen_lsv} as follows:
	\begin{align}\label{eq:pw_linear_lsv}
		\tilde{T}_{ \tilde{c} }(x)
		= \begin{cases}
			\frac{x - c_1}{1 - c_1}, &x \in [c_1, 1], \\
			\frac{c_{n-1} - c_n}{c_n - c_{n+1}} ( x - c_{n+1} ) + c_n, 
			&x \in [c_{n+1}, c_n), \: n \ge 1.
		\end{cases}
	\end{align}
	We also set $\tilde{T}_{\tilde{c}}(0)=0$. 
	In other words, $\tilde{T}_{\tilde{c}}$ maps $[c_1,1]$ linearly onto $[0,1]$ and, for each $n \ge 1$, maps
	$[c_{n+1}, c_n)$ linearly onto $[c_n, c_{n-1})$, with
	$$
	( \tilde{T}_{\tilde{c}}|_{(c_{n+1}, c_n)} )'(x) 
	= \frac{ c_{n-1} - c_n }{ c_n - c_{n+1} }, \quad n \ge 1, 
	\qquad ( \tilde{T}_{\tilde{c}}|_{(c_{1}, 1)} )'(x) = \frac{1}{1 - c_1}.
	$$
	Note that,
	in the special case where the sequence $(c_n)$ is defined by \eqref{eq:cn_lsv}, 
	the first entry time function of $\tilde{T}_{\tilde{c}}$ coincides with \eqref{eq:tau_x}.

	We consider a variant of the extended dynamical system of \cite{LS11}, 
	with local dynamics described by maps of the form \eqref{eq:pw_linear_lsv}.
	A sequence 
	$$\omega = (\omega^x)_{x \ge 0} \in \bigl( (0,1)^{\bZ_+} \bigr)^{\bZ_+}$$
	is called an \emph{environment} if, for each $x \ge 0$, 
	$\omega^x = (\omega_n^x)_{n \ge 0} \in (0,1)^{\bZ_+}$ satisfies
	\begin{align}\label{eq:environment}
		\lim_{n \to \infty} \omega^x_n = 0, 
		\qquad 
		\omega_{n+1}^x < \omega_n^x \le \omega_0^x = 1 
		\quad \forall n \ge 0.
	\end{align}
	Given such an environment $\omega$, for each $x \ge 0$ we define a map $U_x : [0,1) \to [0,2)$ by
	\begin{align}\label{eq:loc_dyn}
		U_x(u)
		= \begin{cases}
			\tilde{T}_{ \omega^x }(u), &u \in [0, \omega_1^x), \\
			\tilde{T}_{\omega^x}(u) + 1, &u \in [\omega_1^x, 1),
		\end{cases}
	\end{align}
	where $\tilde{T}_{\omega^x}$ is defined as in \eqref{eq:pw_linear_lsv}. 
	Thus $U_x$ is a piecewise linear transformation that maps 
	$[\omega_{n+1}^x, \omega_n^x)$ onto $[\omega^x_n, \omega^x_{n-1})$ for $n \ge 1$, and maps
	$[\omega_1^x, \omega_0^x) = [\omega_1^x, 1)$ onto $[1,2)$.
	
	Analogously to \eqref{eq:LS_local},
	the local maps $U_x$ describe the motion of
	a particle in an inhomogeneous medium formed by a chain of infinitely many
	cells, represented by the subintervals
	$[x, x+1)$, $x \in \bZ_+$, of the nonnegative real line $\bR_+$. 
	If the position of the particle at time $n$ is $u_n \in [x, x+1)$, 
	then at time $n+1$ its position is given by
	$$
	u_{n+1} = x + U_x(u_n - x).
	$$
	Equivalently,
	\begin{align}\label{eq:chaos}
		u_n = \cU_\omega^n(u_0),
	\end{align}
	where 
	the global map $\cU_\omega : \bR_+ \to \bR_+$ is defined by
	$$
	\cU_\omega(u) = \lfloor u \rfloor + U_{\lfloor u \rfloor}(u - \lfloor u \rfloor),
	$$
	with $\lfloor u \rfloor = \max \{ n \in \bZ : n \le u \}$ denoting the integer part of $u$. 
	We assume throughout that the initial state of the particle $u_0$ is located in the leftmost cell $[0,1)$.
		
	\subsubsection{Random walk from intermittent dynamics}\label{sec:rwre}
	
	Given an environment $\omega = (\omega^x)_{x \ge 0}$ satisfying \eqref{eq:environment}, we 
	define a transition kernel on $\bZ_+ \times \bZ_+$
	as follows (see Figure \ref{fig:chain}):	
	\begin{align*}
		\cK( (x,y), (x', y') ) = 
		\begin{cases}
			1, &\text{if $y' > 0$ and $y = y' - 1$ and $x = x'$}, \\
			1 - \omega_1^x, &\text{if $y = y' = 0$ and $x = x' + 1$}, \\
			\omega_{y }^{  x } - \omega_{y + 1}^{  x}, &\text{if $y'=0$, $y > 0$, and $x = x' + 1$}, \\
			0, &\text{otherwise.}
		\end{cases}
	\end{align*}	
	
\begin{figure}[h]
\begin{tikzpicture}[
	scale=0.85,
	x=5.0cm, y=1.90cm, >=Latex,
	state/.style={circle, draw, minimum size=5.5mm, 
	inner sep=1pt, font=\scriptsize},
	lbl/.style={font=\scriptsize, 
	inner sep=1pt, fill=white, fill opacity=1, text opacity=1}
	]
	
	% column labels
	\node at (0,2.9) {\small $x=0$};
	\node at (1,2.9) {\small $x=1$};
	\node at (2,2.9) {\small $x=2$};
	
	% continuation dots labels
	\node at (0,2.6) {$\vdots$};
	\node at (1,2.6) {$\vdots$};
	\node at (2,2.6) {$\vdots$};
	
	% states labels (x,y)
	\foreach \x/\col in {0/A,1/B,2/C}{
		\foreach \y in {0,1,2}{
			\node[state] (\col\y) at (\x,\y) {$(\x,\y)$};}
	}
	
	% vertical transitions
	\foreach \col in {A,B,C}{
		\draw[->] (\col2) -- node[lbl,right,xshift=2pt]{1} (\col1);
		\draw[->] (\col1) -- node[lbl,right,xshift=2pt]{1} (\col0);
	}
	
	% transitions from y=0, x=0 to x=1
	\draw[->, bend left=15] (A0) to node[lbl,above,sloped, yshift=2pt, pos=0.52] {$1-\omega^{1}_{1}$} (B0);
	\draw[->, bend left=5]  (A0) to node[lbl,above,sloped, yshift=2pt, pos=0.70] {$\omega^{1}_{1}-\omega^{1}_{2}$} (B1);
	\draw[->, bend right=5] (A0) to node[lbl,above,sloped, yshift=2pt, pos=0.55] {$\omega^{1}_{2}-\omega^{1}_{3}$} (B2);
	
	% transitions from y=0, x=1 to x=2
	\draw[->, bend left=15] (B0) to node[lbl,above,sloped, yshift=2pt, pos=0.52] {$1-\omega^{2}_{1}$} (C0);
	\draw[->, bend left=5]  (B0) to node[lbl,above,sloped, yshift=2pt, pos=0.70] {$\omega^{2}_{1}-\omega^{2}_{2}$} (C1);
	\draw[->, bend right=5] (B0) to node[lbl,above,sloped, yshift=3pt,  pos=0.55] {$\omega^{2}_{2}-\omega^{2}_{3}$} (C2);
	
	% right-side continuation dots
	\foreach \y in {0,1,2}{
		\node at (2.5,\y) {$\cdots$};
	}
	
\end{tikzpicture}
\caption{Transition kernel $\cK$.}\label{fig:chain}
\end{figure}

Let $(Z_n)_{n \ge 0}$ be a Markov chain on $\bZ_+ \times \bZ_+$ with initial distribution and transition probabilities
as follows:
\begin{align}\label{eq:walk}
	\begin{split}
	&P( Z_0 = (0,y) ) = 	\omega_{y}^0 - \omega_{y+1}^0, \quad \forall y \in \bZ_+, \\
	&P( Z_{n+1} =  (x,y) \: | \: Z_n = (x', y')    ) = 	\cK( (x,y), (x', y') ).
	\end{split}
\end{align}
After fixing the environment $\omega$, we denote by $P_\omega$ the distribution
of the Markov chain $(Z_n)$ on the path space $(\bZ_+\times\bZ_+)^{\bZ_+}$.
We write $E_\omega$ and $\text{Var}_\omega$ respectively for the expectation and variance with respect to $P_\omega$.

	For $x,y \in \bZ_+$, 
	let $I_{(x ,y)} \subset \bR$ denote the subinterval  corresponding to the state $(x,y)$ of $(Z_n)$:
	$$
	I_{(x,y)} = [ x + \omega^x_{y+1}, x +  \omega^x_{y } ).
	$$
	We also define $I_{(-1,y)} = \emptyset$.
	
	\begin{prop} Suppose that $u_0 \sim \text{Uniform}[0,1)$. Then, for all $n \ge 0$, $a,b \in \bR$,
		\begin{align}\label{eq:distr_un}
			P( u_n \in (a,b)  \: | \:  u_n \in I_{ (x,y)} ) = \frac{|(a,b)  \cap I_{(x,y)}|}{ | I_{(x,y)} | },
		\end{align}
		provided that $P( u_n \in I_{(x,y)} ) > 0$.
	\end{prop}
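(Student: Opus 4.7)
The plan is to prove the statement by induction on $n$, exploiting the fact that every branch of the global map $\cU_\omega$ is affine. Equivalently, one shows that the distribution of $u_n$ conditional on $\{u_n \in I_{(x,y)}\}$ is uniform on $I_{(x,y)}$. The base case $n = 0$ is immediate: since $[0,1) = \bigsqcup_{y \ge 0} I_{(0, y)}$ and $u_0 \sim \text{Uniform}[0,1)$, conditional on $u_0 \in I_{(0, y)}$, $u_0$ is uniform on $I_{(0, y)}$.

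For the inductive step, the first task is to identify the preimages of $I_{(x,y)}$ under $\cU_\omega$. Inspection of \eqref{eq:loc_dyn} shows that these consist of (i) the interval $I_{(x, y+1)}$, on which $\cU_\omega$ is an affine bijection onto $I_{(x, y)}$ coming from the intermittent branch of $U_x$; and (ii) when $x \ge 1$, a subinterval $B_{(x,y)} \subset I_{(x-1, 0)}$, mapped affinely onto $I_{(x, y)}$ by the escape branch of $U_{x-1}$. For any Borel set $A \subset I_{(x, y)}$, splitting $\{u_{n+1} \in A\} = \{u_n \in \cU_\omega^{-1}(A)\}$ along these two pieces, applying the inductive hypothesis on each, and computing preimage lengths from the affine slopes ($|A|\,|I_{(x, y+1)}|/|I_{(x, y)}|$ in case (i) and $|A|\,|I_{(x-1, 0)}|$ in case (ii)) yields
\begin{equation*}
P(u_{n+1} \in A) = \frac{|A|}{|I_{(x,y)}|} \Bigl[\, P(u_n \in I_{(x, y+1)}) + \mathbf{1}_{\{x \ge 1\}}\, P(u_n \in I_{(x-1, 0)})\cdot |I_{(x,y)}| \,\Bigr].
\end{equation*}
Taking $A = I_{(x, y)}$ identifies the bracket with $P(u_{n+1} \in I_{(x, y)})$, so substituting $A = (a,b) \cap I_{(x, y)}$ and dividing gives \eqref{eq:distr_un}. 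As a consistency check, the recursion for $P(u_{n+1} \in I_{(x, y)})$ obtained this way coincides with the one induced by the kernel $\cK$, confirming en route that $P(u_n \in I_{(x, y)}) = P(Z_n = (x, y))$.

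The proof is essentially a direct computation enabled by the piecewise linearity of $\cU_\omega$, so no substantial obstacle is anticipated. The only point requiring care is the preimage bookkeeping: verifying that the description above exhausts $\cU_\omega^{-1}(I_{(x,y)})$, that the two contributions are disjoint, and that for $x = 0$ only the intra-cell contribution is present.
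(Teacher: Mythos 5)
Your proposal is correct and takes essentially the same route as the paper: induction on $n$, decomposition of $\cU_\omega^{-1}(I_{(x,y)})$ into the intra-cell preimage $I_{(x,y+1)}$ and (for $x\ge 1$) the escape-branch preimage inside $I_{(x-1,0)}$, and then exploiting affine linearity of $\cU_\omega$ on each branch together with the inductive hypothesis. The only difference is cosmetic: the paper keeps the proportionality constants abstract as $C_{i,n}, C_{i,n}'$ and concludes uniformity from proportionality to $b-a$, whereas you compute the constants explicitly from the branch slopes, which also yields the Markov recursion for $P(u_{n+1}\in I_{(x,y)})$ as a byproduct.
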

	
	\begin{proof} We proceed by induction on $n$. For $n = 0$, the claim follows immediately from the assumption that $u_0 \sim \text{Uniform}[0,1)$. Assume that \eqref{eq:distr_un} holds for $n$. Suppose that 
		$P( u_{n+1} \in I_{(x,y)} ) > 0$, and without loss of generality that
		$$
		x + \omega_{y+1}^x \le a \le b \le x + \omega_{y}^x,
		$$
		so that $(a,b) \subset I_{(x,y)}$.
		By the definition of $\cU_\omega$, 
		$$
		\{  u_{n+1} \in  I_{(x,y)}  \} = \{ u_n \in I_{ (x, y+1) }  \} \cup \{  u_n \in I_{(x-1, 0)}, \, u_{n+1} \in  I_{(x,y)}   \}.
		$$
		Therefore, at least one of $p_1 := P_\omega( u_n \in I_{ (x, y+1) }   )$ and $p_2 := P_\omega( u_n \in I_{(x-1, 0)} )$ is positive.
		Further, for some $C_{i,n} > 0$ independent of $a,b$:
		\begin{align*}
			&P(u_{n+1} \in (a,b) \: | \: u_{n+1} \in I_{(x,y)} ) \\
			&=  \mathbf{1}_{p_1 > 0} C_{1,n} P(  \cU_\omega (u_n) \in (a,b) \: | \: u_n \in I_{ (x,y+1) } ) 
			+ \mathbf{1}_{p_2 > 0}   C_{2,n} P(  \cU_\omega (u_n) \in (a,b) \: | \: u_n \in I_{ (x-1,0) } ).
		\end{align*}
		Now, by a direct computation using the induction hypothesis and the definition of $\cU_\omega$, we find that, 
		for some $C_{i,n}' > 0$ independent of $a,b$:
		\begin{align*}
			&P(  \cU_\omega (u_n) \in (a,b) \: | \: u_n \in I_{ (x,y+1) } ) = C_{1,n}' (b-a), \quad \text{if $p_1$ > 0, and} \\
			&P(  \cU_\omega (u_n) \in (a,b) \: | \: u_n \in I_{ (x-1,0) } ) = C_{2,n}' (b-a), \quad \text{if $p_2 > 0$.}
		\end{align*}
	\end{proof}
	
	Write $Z_n = (X_n,Y_n)$ for the components of the Markov chain. 
	The following result shows that the distribution of the trajectory 
	$\cU_\omega^n(u_0)$ generated by \eqref{eq:chaos} 
	is determined by the distribution of the horizontal coordinate $X_n$.

	\begin{prop}\label{prop:walk_ds_rel} Suppose that $u_0 \sim \text{Uniform}[0,1)$.
		Then, for any $n \ge 0$,
		\begin{align}\label{eq:relation-1}
			P(  u_n \in I_{ (x,y) } ) = P_{\omega}(  Z_n = (x,y)  ).
		\end{align}
		In particular,
		\begin{align}\label{eq:relation-2}
			P( u_n \in [x, x + 1) ) = P_{\omega}(  X_n = x  ).
		\end{align}
	\end{prop}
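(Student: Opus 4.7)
The plan is to prove \eqref{eq:relation-1} by induction on $n$, and then deduce \eqref{eq:relation-2} by summing over $y$. The base case $n = 0$ is immediate: since $u_0 \sim \text{Uniform}[0,1)$ is supported in $[0,1)$, we have $P(u_0 \in I_{(x,y)}) = 0$ for $x \ge 1$, while for $x = 0$, $P(u_0 \in I_{(0,y)}) = |I_{(0,y)}| = \omega_y^0 - \omega_{y+1}^0$, which matches the initial distribution of $Z_0$ specified in \eqref{eq:walk}.

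For the inductive step, I reuse the decomposition already identified in the proof of the previous proposition,
\[
\{u_{n+1} \in I_{(x,y)}\} = \{u_n \in I_{(x, y+1)}\} \cup \{u_n \in I_{(x-1, 0)},\ u_{n+1} \in I_{(x,y)}\},
\]
and note that these two events are disjoint because they impose different integer parts on $u_n$. The inclusion of the first event in $\{u_{n+1} \in I_{(x,y)}\}$ follows from the affine branch of $U_x$ mapping $[\omega_{y+2}^x, \omega_{y+1}^x]$ bijectively onto $[\omega_{y+1}^x, \omega_y^x]$. For the second event, the branch of $U_{x-1}$ on $[\omega_1^{x-1}, 1]$ is affine with image $[1, 2]$; combined with the conditional uniformity of $u_n$ on $I_{(x-1, 0)}$ supplied by the previous proposition, this yields that, conditionally on $u_n \in I_{(x-1, 0)}$, the position $u_{n+1}$ is uniformly distributed on $[x, x+1)$. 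Hence the jump contributes $P(u_n \in I_{(x-1, 0)}) \cdot |I_{(x,y)}|$, where $|I_{(x,y)}| = \omega_y^x - \omega_{y+1}^x$ for $y \ge 1$ and $|I_{(x, 0)}| = 1 - \omega_1^x$.

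Invoking the induction hypothesis on the right and comparing with the one-step formula $P_\omega(Z_{n+1} = (x,y)) = \sum_{(x', y')} P_\omega(Z_n = (x', y')) \, \cK((x,y), (x', y'))$ term by term — using the three nonzero cases in the definition of $\cK$ — closes the induction. Summing \eqref{eq:relation-1} over $y \in \bZ_+$ then yields \eqref{eq:relation-2}.

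The main obstacle is not conceptual but bookkeeping: one must carefully track the boundary cases $x = 0$ (where $I_{(-1, 0)} = \emptyset$, so the jump term vanishes) and $y = 0$ (where the length $1 - \omega_1^x$ replaces $\omega_y^x - \omega_{y+1}^x$), and verify that every nonzero entry of $\cK$ is matched by exactly one term produced by the decomposition above.
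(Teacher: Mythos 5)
Your proposal is correct and follows essentially the same route as the paper: induction on $n$ using the same two-way decomposition of $\{u_{n+1}\in I_{(x,y)}\}$ into the descent event $\{u_n\in I_{(x,y+1)}\}$ and the jump event $\{u_n\in I_{(x-1,0)}\}$, then matching the resulting two terms against the one-step Chapman--Kolmogorov expansion of $P_\omega(Z_{n+1}=(x,y))$ via the induction hypothesis. The only stylistic difference is that you spell out explicitly that, conditionally on $u_n\in I_{(x-1,0)}$, the image $u_{n+1}$ is uniform on $[x,x+1)$; the paper compresses this into the remark that the required identity follows from \eqref{eq:distr_un} and the definition of $\cU_\omega$.
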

	
	\begin{proof}
		We prove \eqref{eq:relation-1} by induction on $n$. For $n=0$ it holds by
		\eqref{eq:walk} and the assumption that $u_0\sim\text{Uniform}[0,1)$. Assume it holds for $n$, and
		let $(x,y)\in\bZ_+\times\bZ_+$. Using the induction hypothesis and the definition of
		$\cU_\omega$,
		\begin{align*}
			P( u_{n+1} \in I_{ (x,y) } ) &= P(u_n \in I_{(x, y + 1)} ) + P( \cU_\omega(u_n) \in  I_{ (x,y) } \: | \: 
			u_n  \in I_{(x-1,0)} ) P( u_n  \in I_{(x-1,0)}  ) \\
			&= P_\omega(  Z_n = (x, y + 1) ) \\
			&+ P( \cU_\omega(u_n) \in  I_{ (x,y) } \: | \: 
			u_n  \in I_{(x-1,0)} ) 
			P_\omega( Z_n  = (x-1, 0) ),
		\end{align*}
		where the second term vanishes if $x=0$. On the other hand, by the transition rules of
		$(Z_n)$,
		\begin{align*}
			P_\omega(Z_{n+1} = (x,y)) &= P_\omega(Z_n = (x, y + 1)) 
			\\
			&+  P_\omega( Z_{n+1} = (x,y) | Z_n = (x-1,0) ) 
			 P_\omega(Z_n = (x-1, 0)) \\
			&= P_\omega(Z_n = (x, y + 1)) +  (  \omega^x_{y} - \omega^x_{y+1}  ) P_\omega(Z_n = (x-1, 0)).
		\end{align*}
		Thus, to prove \eqref{eq:relation-1} it suffices to check that
		$$
		P_\omega\biggl( \cU_\omega(u_n)\in I_{(x,y)} \, |\, u_n\in I_{(x-1,0)}
		\biggr)
		= \omega_y^x-\omega_{y+1}^x,
		$$
		whenever $P(u_n\in I_{(x-1,0)})>0$. This follows directly from \eqref{eq:distr_un}
		and the definition of $\cU_\omega$. Finally, to obtain \eqref{eq:relation-2}, note that
		\[
		P(u_n\in [x,x+1))=\sum_{y=0}^\infty P(u_n\in I_{(x,y)})
		=\sum_{y=0}^\infty P_\omega(Z_n=(x,y)) = P_\omega(X_n=x).
		\]
	\end{proof}
	
	\subsection*{How the paper is organized} The organization of the paper is simple: Section \ref{sec:results} contains the main results on limit theorems for the random walk, both in deterministic and in random 
	environments. Section \ref{sec:proof} is devoted to the proofs of these results.
	
	\section{Statement of main results}\label{sec:results}
	
	In this section we present our main results on the asymptotic behavior of the 
	random walk $(X_n)$ defined as the horizontal coordinate of the Markov chain 
	\eqref{eq:walk}.
	We begin by defining some basic notions that will be used in the analysis of the walk, and then formulate conditions under which a strong law of large numbers, a central limit theorem, and a local limit theorem hold. These results are established both for deterministic regularly varying environments (Section~\ref{sec:regular_limit}) and for weakly dependent
	random environments (Section~\ref{sec:random_limit}).

	\noindent\textbf{Convention.} In the sequel we often use $x$ to denote spatial variables and $n$ 
	to denote temporal variables.
	For sequences $a_{x,n}, b_{x,n} \in \bR$, we write $a_{x,n} = O(b_{x,n})$ if there exists a constant $C > 0$ independent of both $n$ and $x$, such that $|a_{x,n}| \le C\, b_{x,n}$ for all $x,n$.  
	For two sequences $s_{1,x}, s_{2,x}$ depending on $x$, we write $s_{1,x} = o(s_{2,x})$ if
	$
	\lim_{x \to \infty} s_{1,x} / s_{2,x} = 0.
	$
	Similarly, 
	for two sequences $r_{1,n}, r_{2,n}$ depending on $n$, we write $r_{1,n} = o(r_{2,n})$ if
	$
	\lim_{n \to \infty} r_{1,n} / r_{2,n} = 0.
	$

  \subsection{Definitions}   Let $\beta : \bZ_+ \to (1, \infty)$ be a function such that
  \begin{align}\label{eq:beta_star}
  	\beta_* := \inf_{ x \in \bZ_+ } \beta(x) > 1,
  \end{align}
  and define 
  \begin{align}\label{eq:A_x}
  	A_x = \biggl( \sup_{n \ge 1}  n^{ \beta(x) } \omega_n^x \biggr) \vee 1, \quad A_x' = \biggl( \sup_{n \ge 1}  n^{ \beta(x) + 1 } ( \omega_{n-1}^x - \omega_n^x  ) \biggr)  \vee 1.
  \end{align}
  Note that
  \begin{align}\label{eq:rel_ax_ax'}
  	A_x \le (A_x' ( 1 + \beta_*^{-1} )) \vee 1.
  \end{align}
  
  For each $x \in \bZ_+$, define
  \begin{align}\label{eq:T_tau}
  T_x = \inf \{  n \ge 0 \: : \: X_n = x \} \quad \text{and} \quad 
  \tau_x = T_{x + 1} - T_x.
  \end{align}
  Thus $T_x$ is the first hitting time of $x$ and $\tau_x$ is the sojourn time at $x$ for the walk $X_n$. A useful property obvious from the definitions is that
  $
  T_x = \sum_{w = 0}^{x-1} \tau_{w}.
  $
  
  Set
  \begin{align*}
  	m_x =  \sum_{n=0}^\infty \omega_{n}^x \quad
  	\text{and} \quad 
  	 \quad \mu_x = \sum_{w=0}^{x-1} m_w,
  \end{align*}
  which are the mean of $\tau_x$ and $T_x$, respectively (see \eqref{eq:m_x}).  
  Finally, let
  \begin{align}\label{eq:Mn}
  M_n = \inf \{ x \ge 0 \: : \: \mu_x \ge n  \}, \quad n \ge 0,
  \end{align}
  which is a generalized inverse of the sequence $(\mu_x)$. 
  
  \subsection{Limit theorems for deterministic environments}\label{sec:regular_limit}
  Fix an environment $(\omega^x)$ satisfying \eqref{eq:environment}.  
  We impose the following assumption which requires that 
  averages of the expected sojourn times 
  converge to a limit $\mu > 1$. Here $\mu^{-1}$ may be interpreted as the traveling 
  speed of the particle.
  
  \noindent\textbf{Assumption (A1).} 
  There exists $\mu>1$ such that
  $$
  \frac{1}{x}\sum_{w=0}^{x-1} m_w \;=\; \mu + \theta_1(x),
  \qquad \theta_1(x)=o(1).
  $$
  
  \begin{thm}[Strong law of large numbers]\label{thm:slln} Assume that
  	\begin{align}\label{eq:cond_ax_slln}
  	A_x = O( x^\lambda )
  	\end{align}
  	for some $0 \le \lambda < \min\{ 1, \beta_* - 1 \}$, and that (A1) holds with arbitary
  	$\theta_1(x) = o(1)$.
  	Then,
  	\begin{align}\label{eq:slln}
  	\lim_{ n \to \infty }  n^{-1} X_n = \mu^{-1} \quad \text{a.s.}
  	\end{align}
  \end{thm}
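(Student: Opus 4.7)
The strategy is to establish $T_x/x \to \mu$ almost surely, and then invert this into the SLLN for $X_n$. The first step is structural: under $P_\omega$, the chain arrives at column $x$ at time $T_x$ with a random height $Y_{T_x}$ whose distribution (read off from the transition kernel \eqref{eq:walk}) is $P_\omega(Y_{T_x} = y) = \omega_y^x - \omega_{y+1}^x$ for $y \ge 0$; it then descends deterministically to $(x,0)$ in $Y_{T_x}$ steps, so $\tau_x = Y_{T_x} + 1$. Since the distribution of $Y_{T_x}$ depends only on $\omega^x$, the Markov property makes $(\tau_x)_{x \ge 0}$ a family of independent random variables under $P_\omega$, with
$$ P_\omega(\tau_x > n) = \omega_n^x \quad \text{and} \quad E_\omega[\tau_x] = \sum_{n \ge 0} \omega_n^x = m_x. $$
Because $T_x = \sum_{w=0}^{x-1}\tau_w$ and $\mu_x = \sum_{w=0}^{x-1} m_w$, assumption (A1) reduces the goal $T_x/x \to \mu$ a.s.\ to the centered SLLN
$$ \frac{1}{x}\sum_{w=0}^{x-1}(\tau_w - m_w) \to 0 \quad \text{a.s.} $$

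The main technical step is establishing this centered SLLN. Since $\beta_*$ is only required to exceed $1$, the variance of $\tau_x$ may be infinite, so I plan to apply an $L^p$-version of Kolmogorov's SLLN for independent, centered summands (derivable from a truncation at level $x$ together with Kolmogorov's two-series convergence theorem and Kronecker's lemma). The tail bound $\omega_n^x \le A_x n^{-\beta(x)}$ coming from \eqref{eq:A_x}, together with the hypothesis $A_x = O(x^\lambda)$, yields for every $p \in [1, \beta_*)$
$$ E_\omega[\tau_x^p] \le 1 + p\sum_{n \ge 1}(n+1)^{p-1}\omega_n^x \le C A_x \le C x^\lambda, $$
with $C$ independent of $x$, and by Jensen's inequality together with $|a-b|^p \le 2^{p-1}(a^p + b^p)$ the same bound holds for $E_\omega|\tau_x - m_x|^p$. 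The hypothesis $\lambda < \beta_* - 1$ then permits the choice $p \in (\lambda+1, \beta_*)$, whereupon
$$ \sum_{x \ge 1} \frac{E_\omega|\tau_x - m_x|^p}{x^p} \le C \sum_{x \ge 1} x^{\lambda - p} < \infty, $$
and the $L^p$-SLLN delivers the required a.s.\ convergence to zero.

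To finish, observe that $X_n$ is nondecreasing by construction, and that the chain a.s.\ leaves every site $(x,0)$ in one step because the exit probabilities sum to $\omega_0^x = 1$, so $X_n \to \infty$ a.s. The sandwich $T_{X_n} \le n < T_{X_n+1}$ combined with $T_x/x \to \mu$ then gives $n/X_n \to \mu$ a.s., which is exactly \eqref{eq:slln}.

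The principal obstacle is the centered SLLN step: the ordinary finite-variance Kolmogorov SLLN would require both $\beta_* > 2$ (so that $\tau_x$ has finite variance) and $\lambda < 1$ (so that $\sum \mathrm{Var}(\tau_x)/x^2 < \infty$), which is strictly more restrictive than $\lambda < \beta_* - 1$. Using the $p$-th moment criterion with any $p \in (\lambda+1, \beta_*)$ is what matches the hypothesis exactly and accommodates the possibility of infinite second moments.
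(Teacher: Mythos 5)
Your argument is structurally the same as the paper's proof of Lemma~\ref{prop:slln_tau}: you identify $(\tau_x)$ as independent under $P_\omega$ with $P_\omega(\tau_x>n)=\omega_n^x$ (the content of Proposition~\ref{prop:distr_tau}), reduce the SLLN for $X_n$ to one for $T_x=\sum_{w<x}\tau_w$ via the hitting-time inversion, and prove $x^{-1}\sum_{w<x}(\tau_w-m_w)\to 0$ a.s.\ by a truncation-at-level-$x$ argument. You package the last step as a single ``$L^p$-SLLN'', whereas the paper spells the truncation out (Kolmogorov's variance criterion for $\hat\tau_x=\tau_x\mathbf{1}_{\tau_x\le x}$ together with a Markov/Borel--Cantelli argument along the polynomial subsequence $x_k=\lceil k^r\rceil$ for the tails $R_x$). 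Your moment bound $E_\omega|\tau_x-m_x|^p\le CA_x\le Cx^\lambda$ for $1\le p<\beta_*$ is correct; the Jensen observation $m_x^p\le E_\omega[\tau_x^p]$ is exactly the point needed so that the centering does not spoil the bound.

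The one genuine gap concerns the admissible range of $p$. The SLLN you cite---independent centered $\xi_x$ with $\sum_x E|\xi_x|^p/x^p<\infty$ implying $x^{-1}\sum_{w<x}\xi_w\to0$ a.s.---is a theorem only for $p\in(0,2]$. The truncation derivation you allude to uses $E[(\xi_x')^2]\le x^{2-p}E|\xi_x|^p$ for $\xi_x'=\xi_x\mathbf{1}_{|\xi_x|\le x}$, which requires $2-p\ge0$; for $p>2$ the correct normalization is the Rosenthal-type condition $\sum_x E|\xi_x|^p/x^{1+p/2}<\infty$, which with your bound gives only $\lambda<p/2$. Since you need $p>\lambda+1$, the constraint $p\le2$ forces $\lambda<1$, so your proof as written establishes the theorem for $\lambda<\min(1,\beta_*-1)$ rather than the full stated range $\lambda<\beta_*-1$. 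These coincide precisely when $\beta_*\le2$ (the regime relevant for intermittent maps with $\alpha_x\ge1/2$), so you lose nothing there; but if $\beta_*>2$ and $\lambda\in[1,\beta_*-1)$, the argument does not close. You should be aware that the paper's own Kolmogorov step appears to share this restriction: the estimate $\text{Var}_\omega(\hat\tau_x)\le C\max\{\log x,x^{2-\beta_*}\}x^\lambda$ is dominated by the $\log x$ factor when $\beta_*>2$, and summability of $\sum_x x^{-2}\text{Var}_\omega(\hat\tau_x)$ then needs $\lambda<1$; the justification ``since $2-\beta_*+\lambda<1$'' only covers the case where $x^{2-\beta_*}$ dominates. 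So the discrepancy is not a deficiency you introduced, but if you want your write-up to be airtight you should either restrict to $p\le2$ and state $\lambda<\min(1,\beta_*-1)$, or supply a separate argument (along the lines of the paper's Borel--Cantelli-on-a-subsequence treatment of $R_x$, which does reach $\lambda<\beta_*-1$ for that part) and examine the Kolmogorov step independently.
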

  
\begin{remark}
	Consider the special case
	\begin{align}\label{eq:lsv_environment}
		\omega_n^x = g_x^{-n}(1),
	\end{align}
	where $g_x = T_x|_{[0,c_x]}$ and $T_x$ is the map in \eqref{eq:gen_lsv} with parameters
	$$
	\alpha_x \in (0,1/2), \qquad c_x \in (0,1), \qquad \kappa_x > 0
	$$
	satisfying \eqref{eq:cond_params_lsv}.  
	If we set $\beta(x) = 1/\alpha_x$, then the condition $\beta_* > 1$ is equivalent to 
	$\inf_{x \ge 0} \alpha_x < 1$.  
	Moreover, from \eqref{eq:estim_cn} we deduce that \eqref{eq:cond_ax_slln} holds provided that
	\begin{align}\label{eq:cond_alpha_slln}
		\limsup_{x \to \infty} \frac{1}{ \log(x) } \biggl\{ 
		\log(c_x) + \frac{1}{ \alpha_x } \log \biggl( \frac{c_x}{1-c_x} \biggr)
		+ \biggl( \frac{1}{ \alpha_x } + \frac{1}{ \alpha_x^2 } \biggr) \log(2)
		\biggr\} \le \lambda 
	\end{align}
	for some $0 \le \lambda < \min\{ 1, \beta_* - 1 \}$.
	For instance, if $\sup_{x \ge 0} c_x < 1$, then \eqref{eq:cond_alpha_slln} holds if
	$
	\alpha_x^{-1} = o( \log^{1/2}(x)  ).
	$
\end{remark}
  
  If $\beta_* > 2$, 
  we define
  $$
   \sigma_x^2 = \sum_{w=0}^{x-1} \biggl\{ 
   \sum_{n=0}^\infty (2n + 1) \omega_n^w - \biggl[
   \sum_{n=0}^\infty 
   \omega_n^w \biggr]^2,
  \biggr\},
  $$
  which equals the variance of $T_x$ (see \eqref{eq:sigma}) and is finite whenever 
  $A_w < \infty$ for $0 \le w < x$.

  \noindent\textbf{Assumption (A2).} There exists
  $\sigma^2 > 0$ such that
  \begin{align*}
  	x^{-1} \sigma_x^2 = \sigma^2 + \theta_2(x), \qquad \theta_2(x) = o(1).
  \end{align*}
  
  \begin{thm}[Central limit theorem]\label{thm:clt} Let $\beta_* > 2$. Assume that
  	\begin{align}\label{eq:ax_ord_clt}
  		A_x = o( \sqrt{x} ) \quad \text{and} \quad 
  		\sum_{w = 0}^{x-1} A_w^2 = o(x^{\beta_* / 2}),
  	\end{align}
  	and that (A1) and (A2) hold with arbitrary $\theta_i(x) = o(1)$, $i = 1,2$.
  	Then,
  	\begin{align}\label{eq:clt}
  		\frac{ X_n - n \mu^{-1} }{ \sqrt{n} \tilde{\sigma} } \stackrel{\cD}{\to} N(0,1), \quad \text{as $n \to \infty$},
  	\end{align}
  	where $\stackrel{\cD}{\to}$ means convergence in distribution and 
  	$\tilde{\sigma}^2 = \sigma^2 / \mu^3$.
  \end{thm}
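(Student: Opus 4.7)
My plan is to reduce the CLT for $X_n$ to an additive CLT for the hitting times $T_x=\sum_{w=0}^{x-1}\tau_w$ under $P_\omega$, and then invert.

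\emph{Step 1: Independence and moments of sojourn times.} From the transition kernel $\cK$, upon entering a new cell $x$ the chain draws a fresh laminar counter $Y^{(x)}$ with $P_\omega(Y^{(x)}=k)=\omega_k^x-\omega_{k+1}^x$ (using $\omega_0^x=1$) and then counts it down to zero deterministically. Hence, under $P_\omega$, the sojourn times $(\tau_x)_{x\ge 0}$ are independent, each distributed as $1+Y^{(x)}$; in particular $T_x$ is a sum of independent random variables with mean $\mu_x$ and variance $\sigma_x^2$.

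\emph{Step 2: Lindeberg-Feller CLT for $T_x$.} I would apply the Lindeberg-Feller theorem to $T_x-\mu_x=\sum_{w=0}^{x-1}(\tau_w-m_w)$. The definition of $A_w$ gives the tail bound $P_\omega(\tau_w>k)=\omega_k^w\le A_w k^{-\beta_*}$ for $k\ge 1$, whence $E_\omega[\tau_w^2\mathbf{1}_{\tau_w>M}]=O(A_w M^{2-\beta_*})$ for $M\ge 1$. Summing over $w<x$ with $M=\varepsilon\sqrt{x}\sigma$ and applying Cauchy-Schwarz,
$$
\frac{1}{x\sigma^2}\sum_{w=0}^{x-1} E_\omega\!\left[(\tau_w-m_w)^2\mathbf{1}_{|\tau_w-m_w|>\varepsilon\sqrt{x}\sigma}\right]
= O\!\left(x^{(1-\beta_*)/2}\Bigl(\sum_{w<x}A_w^2\Bigr)^{\!1/2}\right),
$$
which vanishes by $\sum_{w<x}A_w^2=o(x^{\beta_*/2})$ together with $\beta_*>2$. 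Invoking (A2) to replace $\sigma_x$ by $\sqrt{x}\sigma$ asymptotically yields
$$
\frac{T_x-\mu_x}{\sqrt{x}\sigma}\stackrel{\cD}{\to}N(0,1).
$$

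\emph{Step 3: Inversion.} Using the duality $\{X_n<x\}=\{T_x>n\}$ with $x_n:=\lceil n/\mu+y\sqrt{n}\tilde\sigma\rceil$, I standardize:
$$
P_\omega(X_n<x_n)=P_\omega\!\left(\frac{T_{x_n}-\mu_{x_n}}{\sqrt{x_n}\sigma}>\frac{n-\mu_{x_n}}{\sqrt{x_n}\sigma}\right).
$$
Step 2 identifies the law of the standardized $T_{x_n}$ as $N(0,1)$ in the limit; by (A1), $\mu_{x_n}=x_n\mu+x_n\theta_1(x_n)$, and combining this with $\tilde\sigma^2=\sigma^2/\mu^3$ the threshold converges to $-y$. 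Hence $P_\omega(X_n<x_n)\to\Phi(y)$, which is the claimed CLT.

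\emph{Main obstacle.} The delicate part is the Lindeberg check in Step 2: the tail parameters $A_w$ must cooperate with the growth hypothesis $\sum A_w^2=o(x^{\beta_*/2})$, with the Cauchy-Schwarz loss absorbed by the margin provided by $\beta_*>2$. A secondary subtlety arises in Step 3: the error term $\sqrt{x_n}\,\theta_1(x_n)/\sigma$ in the threshold must vanish, which demands that the $o(1)$ rate in (A1) be sufficiently fast on the subsequence $x_n\sim n/\mu$; otherwise one must recenter $X_n$ by $\mu^{-1}\mu_{x_n}$ (or by $M_n$) and show the two centerings differ by $o(\sqrt{n})$.
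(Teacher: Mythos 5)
Your proposal follows the paper's route step for step: independence of the sojourn times $(\tau_w)$, a Lindeberg central limit theorem for $T_x$, and renewal inversion $\{X_n \le x\} = \{T_{x+1} > n\}$. Step~2 is sound: the paper bounds the truncated second moment by $C A_w^2 x^{1-\beta_*/2}$ so that the sum matches the hypothesis $\sum_{w<x}A_w^2 = o(x^{\beta_*/2})$ exactly, whereas your estimate $C A_w x^{1-\beta_*/2}$ plus Cauchy--Schwarz exploits the spare margin afforded by $\beta_*>2$; both work.

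Your Step~3 concern, however, is a genuine gap rather than a side remark, and the paper's own proof does not escape it. With $x_n=\round{c_n(y)}+1$ and $c_n(y)=y\tilde\sigma\sqrt n+n\mu^{-1}$, the threshold decomposes as
\begin{align*}
\frac{n-\mu_{x_n}}{\sigma_{x_n}}=\frac{n-\mu\,c_n(y)}{\sigma_{x_n}}-\frac{x_n\theta_1(x_n)+O(1)}{\sigma_{x_n}};
\end{align*}
the first piece tends to $-y$, but the second is of order $\sqrt{n}\,\theta_1(n/\mu)$, which need not vanish when $\theta_1$ is merely $o(1)$. The paper's assertion that combining $(\mu c_n(x))^{-1}\mu_{\round{c_n(x)}+1}\to1$, $\sigma^{-2}c_n(x)^{-1}\sigma_{\round{c_n(x)}+1}^2\to1$, and the algebraic limit yields \eqref{eq:limit_scaled_norm} controls only the \emph{relative} centering error, not the absolute one on the $\sqrt n$ scale; what is actually needed is $\theta_1(x)=o(x^{-1/2})$, equivalently $M_n-n\mu^{-1}=o(\sqrt n)$. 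So you should not expect the threshold to converge under the hypotheses as literally stated. To close the gap, either strengthen (A1) to $\theta_1(x)=o(x^{-1/2})$, or recenter $X_n$ by $M_n$, as you already suggest in your "otherwise" remedy — that remedy is the correct one and should be carried out, not left as an aside.
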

  
  \begin{remark} 
  	In the special case of environments defined through \eqref{eq:lsv_environment},
\eqref{eq:ax_ord_clt} holds provided that, as $x \to \infty$,
\begin{align*}
	&-2^{-1} \log(x) 
	+ 	\log(c_x) + \frac{1}{ \alpha_x } \log \biggl( \frac{c_x}{1-c_x} \biggr)
	+ \biggl( \frac{1}{ \alpha_x } + \frac{1}{ \alpha_x^2 } \biggr) \log(2)
	\to  -\infty, \\
	&x^{ - \beta_* / 2 } \sum_{w = 0}^{ x -1  } 
	c_w^2 \biggl( \frac{c_w}{1-c_w} \biggr)^{2/\alpha_w}  
	2^{ 2/\alpha_w + 2 / \alpha_w^2 }
	= o(1).
\end{align*}
For example, if $\sup_x c_x < 1$, then \eqref{eq:ax_ord_clt} is satisfied provided that
$
\alpha_x^{-1} = o( \log^{1/2}(x)  ).
$
  \end{remark}
  
  Set
  \begin{align}\label{eq:L_def}
  L(x; u) = 	\max_{\ell : |\ell| \le u b(x)} \biggl| 
  \sum_{ w \in [ x ]_\ell } ( m_w  - \mu )
  \biggr|, \quad b(x) = (x \log x)^{1/2},
  \end{align}
  where $[x]_\ell = [x, x + \ell - 1] \cap \bZ_+$ if $\ell \ge 0$ and 
  $[x]_\ell = [x + \ell - 1, x] \cap \bZ_+$ if $\ell < 0$.

The following result establishes a local limit theorem that characterizes the asymptotic behavior of  
$P_\omega(X_n = x)$, or equivalently (via \eqref{eq:relation-2}),  
$P(\mathcal{U}^n_\omega(u_0) \in [x, x+1))$.  
Unlike the local CLT \eqref{eq:lclt}, the limiting distribution here is a weighted sum of Gaussian densities, where the weights are described by the environment configurations $\omega_n^x$.

  \begin{thm}[Local limit theorem]\label{thm:llt} Let $\beta_* > 2$. Assume that the following hold
  for some $\ve > 0$ and 
  $$
  0 \le \eta < \min \biggl\{  \frac12, \frac{\beta_* - 2}{2}   \biggr\}.
  $$
  \begin{itemize}
  	\item[(B1)] Assumptions (A1) and (A2) hold with
  	\begin{align}\label{eq:theta_ass}
  		\theta_i(x) = o( x^{-\eta}  ( \log x )^{-1/2} ), \quad i = 1,2.
  	\end{align}
  	\item[(B2)] Define
  	  	 \begin{align*}
  		B_w = \begin{cases}
  			A_w', &\text{if $\beta_* \in (2,3]$,} \\
  			A_w, &\text{if $\beta_* > 3$.}
  		\end{cases}
  	\end{align*}
  	Then
  	\begin{align}\label{eq:ass_A}
  		A_x = O(x^\eta) \quad \text{and} \quad x^{-1} \sum_{w=0}^{x-1} B_w^{3 + \ve} = O(1).
  	\end{align}
  	\item[(B3)] For all $u > 0$, 
  	\begin{align}\label{eq:L_ass}
  		L(x; u) = o(x^{1/2 - \eta}).
  	\end{align}
  	\item[(B4)] The quantity
  	  \begin{align}\label{eq:aperiod-main}
  		K_x = \sup_{n \ge 2} \frac{  \omega_{n}^x - \omega_{n+1}^x }{ \omega_1^x - \omega_2^x }
  		+ \frac{ \omega_1^x - \omega_2^x  }{ 1 - \omega_1^x }
  	\end{align}
  	satisfies $x^{-1} \sum_{w = 0}^{x-1} K_w = O(1)$.
  \end{itemize}
  Let $h_\ell$ be the density of $N(M_\ell, \ell \tilde{\sigma}^2)$, 
  where $M_\ell$ is defined by \eqref{eq:Mn}, and let $\tilde{\sigma}^2 = \sigma^2 / \mu^3$.
  Then, as $n \to \infty$, 
  \begin{align}\label{eq:local_limit}
  	P_\omega(X_n = x) = \mu^{-1} \sum_{\ell = 1}^n h_\ell(x) \omega_{n - \ell}^x +
  	o(n^{-1/2}),
  \end{align}
  where the error term is uniform in $x$.
  \end{thm}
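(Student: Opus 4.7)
At each visit to site $x$ the coordinate $Y$ is drawn afresh from a distribution depending only on $\omega^x$, so the sojourn time $\tau_x$ at $x$ is independent of the first hitting time $T_x$. Conditioning on $T_x$ and using $P_\omega(\tau_x>k)=\omega^x_k$ gives the renewal-type identity
\[
P_\omega(X_n=x)=\sum_{\ell=1}^{n}P_\omega(T_x=\ell)\,\omega^x_{n-\ell}.
\]
The problem thus reduces to a quantitative local limit theorem for $T_x=\sum_{w=0}^{x-1}\tau_w$, viewed as a sum of independent but non-identically distributed random variables with polynomial tails $P_\omega(\tau_w>k)=\omega^w_k\le A_w k^{-\beta(w)}$ and $P_\omega(\tau_w=k)\le A_w'k^{-\beta(w)-1}$.

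The core step is to prove, uniformly in $\ell$, that $P_\omega(T_x=\ell)=\phi_{\mu_x,\sigma_x^2}(\ell)+o(x^{-1/2-\eta})$. I would proceed via the Fourier inversion
\[
P_\omega(T_x=\ell)=\frac{1}{2\pi}\int_{-\pi}^{\pi}e^{-\imag\theta\ell}\prod_{w=0}^{x-1}\varphi_w(\theta)\,d\theta,\qquad \varphi_w(\theta)=E_\omega[e^{\imag\theta\tau_w}],
\]
splitting the domain at $|\theta|=\delta x^{-1/2}$. In the central region, a Taylor expansion of $\log\varphi_w$ reproduces $\phi_{\mu_x,\sigma_x^2}(\ell)$; the third-order remainder is bounded by $E_\omega[\tau_w^3]\lesssim A_w^3$ when $\beta_*>3$, giving an Edgeworth-type error of order $x^{-1}$, while for $\beta_*\in(2,3]$ the third moment may diverge and one truncates $\tau_w$ at a level of order $x^{1/2+\kappa}$, using the sharper tail bound involving $A_w'$ to control both the discarded mass and the truncated moment. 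In either case the summability condition in (B2) makes the aggregate contribution manageable. Away from zero, the aperiodicity hypothesis (B4) combined with $x^{-1}\sum_w K_w=O(1)$ forces $\prod_w|\varphi_w(\theta)|$ to decay (stretched-)exponentially in $x$, so the tail portion of the integral is negligible.

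Next I would convert the $(\mu_x,\sigma_x^2)$-Gaussian in $\ell$ to the $(M_\ell,\ell\tilde\sigma^2)$-Gaussian in $x$. A direct algebraic check shows that if $\mu_y=\mu y$ and $\sigma_y^2=\sigma^2 y$ exactly, then $\phi_{\mu_x,\sigma_x^2}(\ell)=\mu^{-1}\phi_{\ell/\mu,\ell\tilde\sigma^2}(x)$ identically. Writing $x=M_\ell+z$ and Taylor expanding $\mu_x$ about $x=M_\ell$ gives $\ell-\mu_x\approx -\mu z-[r(M_\ell+z)-r(M_\ell)]$, where $r(x)=\mu_x-\mu x$ is the quantity governed by (A1). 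Over intervals of length $|z|\lesssim\sqrt{x\log x}$ this increment is precisely $L(M_\ell;u)=o(x^{1/2-\eta})$ by (B3); feeding this and the variance error from (B1) into the Gaussian exponent produces a multiplicative factor $1+o(x^{-\eta})$ in the bulk $|\ell-\mu x|\le C\sqrt{x\log x}$, while outside the bulk both $\phi_{\mu_x,\sigma_x^2}(\ell)$ and $h_\ell(x)$ decay faster than any polynomial and the discrepancy is absorbed into the error.

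Reassembling, $P_\omega(X_n=x)=\mu^{-1}\sum_{\ell=1}^{n}h_\ell(x)\omega^x_{n-\ell}+R(x,n)$ with $|R(x,n)|\le o(x^{-1/2-\eta})\sum_\ell\omega^x_{n-\ell}=o(x^{-1/2-\eta})\cdot m_x$; since (B2) forces $m_x=O(A_x)=O(x^\eta)$, the total error is $o(x^{-1/2})$, which is $o(n^{-1/2})$ in the bulk $x\asymp n/\mu$, and outside the bulk separate tail bounds on $T_x$ show both sides are $o(n^{-1/2})$ individually. The hardest part is the local CLT for $T_x$ when $\beta_*\in(2,3]$: without a finite third moment, the classical Edgeworth machinery does not apply directly, and securing the rate $o(x^{-1/2-\eta})$ uniformly in $\ell$ requires a carefully tuned truncation driven by $A_w'$ together with the uniform non-degeneracy of $\varphi_w$ off zero supplied by (B4). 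The secondary delicate point is the matching of the two Gaussian parametrisations, which works only because (B1) and (B3) are calibrated at exactly the rate needed to absorb the environment-induced shift $M_\ell-\ell/\mu$.
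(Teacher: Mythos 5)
Your overall architecture matches the paper's: you correctly reduce via the renewal identity $P_\omega(X_n=x)=\sum_{\ell}P_\omega(T_x=\ell)\,\omega^x_{n-\ell}$, prove a local CLT for $T_x=\sum_{w<x}\tau_w$, and then pass from the Gaussian in $\ell$ with parameters $(\mu_x,\sigma_x^2)$ to the Gaussian in $x$ with parameters $(M_\ell,\ell\tilde\sigma^2)$. The paper, however, takes a cleaner path on both hard steps: for the LLT for $T_x$ it invokes an off-the-shelf theorem for independent non-identically distributed variables (verifying the hypothesis $\sigma_x^2/\iota_x=O(1)$ using (B2) and (B4)) rather than redoing the Fourier analysis; and it splits the Gaussian swap into two intermediate comparisons $f_x\to g_x\to\mu^{-1}h_n$ rather than attempting it in one stroke.

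There is a genuine gap in your plan. The claim that $P_\omega(T_x=\ell)=\phi_{\mu_x,\sigma_x^2}(\ell)+o(x^{-1/2-\eta})$ \emph{uniformly in $\ell$} cannot hold with a pure Gaussian main term when $\eta>0$: the first Edgeworth correction is $\propto\kappa_3\sigma_x^{-3}H_3((\ell-\mu_x)/\sigma_x)\phi_{\mu_x,\sigma_x^2}(\ell)$, and since $\kappa_3=O(x)$ while $\sigma_x^3\asymp x^{3/2}$ this correction is genuinely of size $x^{-1/2}$ at $\ell\approx\mu_x$. Truncation for $\beta_*\in(2,3]$ does not change this; it only controls the size of the effective third cumulant. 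Without that $x^{-\eta}$ improvement in the LLT rate, your final bound becomes $o(x^{-1/2})\cdot m_x=o(x^{\eta-1/2})$, which in the bulk $x\asymp n/\mu$ is only $o(n^{\eta-1/2})$, too weak. The paper escapes this by obtaining only the rate $o(x^{-1/2})$ uniformly in $n$ from the cited LLT, and recovering the extra $x^{-\eta}$ factor in a \emph{separate} step: for $x\le\varepsilon n$ both $P_\omega(T_x=n)$ and $f_x(n)$ are shown to decay faster than any power of $n$, so the weight $x^\eta$ costs nothing there, and for $x\ge\varepsilon n$ the weight is absorbed into $n^{\eta}$; then in the final convolution the $\ell$-dependence of the LLT error, not its $x$-dependence, is exploited together with the tail decay $\omega^x_{n-\ell}\le A_x(n-\ell)^{-\beta_*}$ and $x^{-\eta}A_x=O(1)$. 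Finally, a small but real error: your asserted identity $\phi_{\mu_x,\sigma_x^2}(\ell)=\mu^{-1}\phi_{\ell/\mu,\ell\tilde\sigma^2}(x)$ under $\mu_y=\mu y$, $\sigma_y^2=\sigma^2 y$ is false off the peak; the two sides have different variance normalizations ($\sigma^2 x$ vs.\ $\ell\sigma^2/\mu$) and agree only when $\ell=\mu x$. The paper's intermediate Gaussian $g_x$ exists precisely to bridge that discrepancy, and absorbing it requires the calibration in (B1)--(B3), not an exact algebraic cancellation.
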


  \begin{remark}
  In the case of environments defined by \eqref{eq:lsv_environment} we have 
  $\omega_{n}^x - \omega_{n+1}^x \le \omega_{n-1}^x - \omega_n^x$ due to expansion. 
  Hence, (B4) holds automatically with $K_x \le 2$. Further, from \eqref{eq:estim_cn} and \eqref{eq:estimate_diff_cn} we see that (B2) holds if:
  \begin{align*}
  	&\limsup_{x \to \infty} \frac{1}{ \log(x) } \biggl\{ 
  	\log(c_x) + \frac{1}{ \alpha_x } \log \biggl( \frac{c_x}{1-c_x} \biggr)
  	+ \biggl( \frac{1}{ \alpha_x } + \frac{1}{ \alpha_x^2 } \biggr) \log(2)
  	\biggr\} < \eta, \\
  	&\frac{1}{x} \sum_{w=0}^{x-1}
  	c_w^{3 + \ve} \biggl( \frac{c_w}{1-c_w} \biggr)^{(3 + \ve)/\alpha_w}  
  	2^{2(3 + \ve)/\alpha_w + (3 + \ve) / \alpha_w^2 }
  	  = O(1).
  \end{align*}
  In particular, if $\alpha_x \in [\alpha_{-}, \alpha_{+}]$ with $0 < \alpha_{-} < \alpha_{+} < 1/2$ and $\sup_{x} c_x < 1$, then (B2) holds for any $\eta > 0$ and $\ve > 0$. If, in addition, (B1) and (B3) hold 
  for some $\eta > 0$, 
  then \eqref{eq:local_limit} holds.
  \end{remark}
  
  \begin{remark}
  Assumptions (B1) and (B3) correspond to \cite[equations (2.5), (2.6)]{LS11} and \cite[equation (2.8)]{LS11},
  respecively, in the uniformly expanding model of \cite{LS11}.
  \end{remark}

  \subsection{Quenched limit theorems for random environments}\label{sec:random_limit}
  We now turn to random environments, in which the deterministic parameters $\omega_x^n$
  from Section~\ref{sec:regular_limit} are replaced by 
  realizations of suitable
  random variables.
  
  More precisely,
  let $(\Omega, \cF, \bP)$ be an arbitrary probability space equipped with a 
  measurable 
  transformation $\Theta : \Omega \to \Omega$ that preserves $\bP$:
  $
  \Theta_* \bP  = \bP.
  $
  Let $\gamma : \Omega \to \Omega_0$ 
  be a measurable transformation, where 
  $$
  \Omega_0 = \{ (\omega_n)_{n \ge 0} \in (0,1]^{\bZ_+}  \: : \:  \text{$\omega_0 = 1$ and
  $\omega_{n} > \omega_{n+1} \to 0$
  }  \}.
  $$
  We equip $\Omega_0$
  with the sigma algebra of Borel sets. We then define the environment at site $x$ by
  \begin{align}\label{eq:random_env}
  (\omega^x_n)_{n \ge 0} = 
  \gamma(\Theta^x(\omega))
  \end{align}
  where $\omega$ is sampled according to $\bP$. Thus $\Theta$ \say{shifts} the environment along the sites, and $\gamma$ specifies
  how each $\Theta^x(\omega)$ generates the local sequence $(\omega^x_n)_{n\ge0}$.
  
  Let $\bar{\beta} : \Omega \to (1, \infty)$ be a function such that 
  $$
  \bar{\beta}_* := \text{ess inf}_{\omega \in \Omega} \bar{\beta}(\omega) > 1,
  $$
  and define
  $$
  \bar{A}(\omega) = \biggl(   \sup_{n \ge 1}  n^{ \bar{\beta}(\omega) } \gamma( \omega )_n  \biggr), \quad 
  \bar{A}'(\omega) = \biggl(   \sup_{n \ge 1}  n^{ \bar{\beta}(\omega) + 1 } ( \gamma( \omega )_{n-1}  - \gamma( \omega )_n ) \biggr).
  $$
  These quantities play the roles of $\beta_*$, $A_x$, and $A_x'$ in the random setting.
  
  \begin{thm}[Quenched SLLN and CLT]\label{thm:clt_random} Let $(\Theta, \bP)$ be ergodic.
  \begin{itemize}
  	\item[(i)] Assume that $\bar{\beta}_* > 1$ and that
  	  \begin{align}\label{eq:A_moment_slln}
  		\bE[\bar{A}^q] < \infty	
  	\end{align}
  	holds with $q > 1 / \min \{ \bar{\beta}_* - 1 , 1 \}$. Then the strong law of large numbers
  	 \eqref{eq:slln} holds for $\bP$-a.e. $\omega \in \Omega$. \smallskip
  	\item[(ii)] Assume that  $\bar{\beta}_* > 2$, and that 
  	  \begin{align}\label{eq:A_moment_clt}
  		\bE[\bar{A}^2] < \infty.
  	\end{align}
  	Then the central limit theorem
  	\eqref{eq:clt} holds for $\bP$-a.e. $\omega \in \Omega$.
  	  \end{itemize}
  	\end{thm}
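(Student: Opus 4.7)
The plan is to apply Theorem~\ref{thm:slln} for (i) and Theorem~\ref{thm:clt} for (ii) pointwise in $\omega$, after verifying their deterministic hypotheses on a set of full $\bP$-measure. The two workhorses are Birkhoff's ergodic theorem, which handles the Ces\`aro conditions (A1) and (A2), and the Borel--Cantelli lemma applied to the stationary sequence $\bar A(\Theta^x\omega)$, which controls the growth of $A_x$. I use throughout the pointwise bound $\beta_*(\omega)=\inf_{x\ge 0}\bar\beta(\Theta^x\omega)\ge\bar\beta_*$, which holds $\bP$-a.s.\ by stationarity together with countable subadditivity of null sets.

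For part~(i), I first record the standard consequence of Borel--Cantelli: for any stationary sequence $(Y_x)$ of copies of $Y$ with $\bE[|Y|^q]<\infty$, the summability $\sum_{x\ge 1}\bP(|Y|^q>\varepsilon x)\le \varepsilon^{-1}\bE[|Y|^q]$ yields $Y_x/x^{1/q}\to 0$ a.s. Applied with $Y=\bar A$ and $q=1/\lambda$ this gives $A_x=o(x^\lambda)$ almost surely, verifying \eqref{eq:cond_ax_slln}. For Assumption~(A1), observe that $m_x=m_0\circ\Theta^x$ with $m_0=\sum_{n\ge 0}\gamma(\omega)_n \le 1+C_{\bar\beta_*}\bar A$ (using $\gamma(\omega)_n\le\bar A\,n^{-\bar\beta_*}$ and $\bar\beta_*>1$); Birkhoff then gives $x^{-1}\sum_{w<x}m_w\to \mu:=\bE[m_0]$ $\bP$-a.s., with $\mu>1$ thanks to $m_0\ge 1$ and $\bP(m_0>1)>0$ (since $\omega_1^0>0$ on $\Omega_0$).

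For part~(ii), the same Borel--Cantelli argument with $q=2$ gives $A_x=o(\sqrt{x})$ a.s. The second half of \eqref{eq:ax_ord_clt} follows from Birkhoff applied to $\bar A^2$: $\sum_{w<x}A_w^2 = x\bE[\bar A^2]+o(x) = o(x^{\beta_*/2})$, using $\beta_*\ge\bar\beta_*>2$. Assumption~(A1) is handled as in part~(i), now with $\bE[\bar A]\le\bE[\bar A^2]^{1/2}<\infty$. For (A2), the summand $\sum_{n\ge 0}(2n+3)\omega_n^0-(\sum_n\omega_n^0)^2$ is a shift-stationary function of $\omega$ dominated by $3+C_{\bar\beta_*}'\bar A\in L^1(\bP)$ (the constant comes from $\sum_{n\ge 1}(2n+3)n^{-\bar\beta_*}<\infty$, which is where $\bar\beta_*>2$ enters); a final application of Birkhoff delivers $x^{-1}\sigma_x^2\to \sigma^2$ a.s.\ for some finite $\sigma^2\ge 0$.

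The only delicate item is the integrability bookkeeping: calibrating the moment assumption on $\bar A$ to the polynomial exponents in \eqref{eq:cond_ax_slln} and \eqref{eq:ax_ord_clt}, and certifying that $m_0$ and the summand defining $\sigma_x^2$ lie in $L^1(\bP)$. Once this is settled, (i) and (ii) drop out by pointwise application of Theorems~\ref{thm:slln} and~\ref{thm:clt}; no other serious obstacle is anticipated.
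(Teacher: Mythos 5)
Your approach matches the paper's: verify the deterministic hypotheses of Theorems~\ref{thm:slln} and~\ref{thm:clt} on a full-measure set, using the Borel--Cantelli argument for the growth of $A_x$ and Birkhoff's ergodic theorem to produce (A1) and (A2). One small loose end in part~(ii): you conclude only that $\sigma^2\ge 0$, whereas (A2) requires $\sigma^2>0$; this does hold because $s_0^2=\mathrm{Var}_\omega(\tau_0)>0$ pointwise (as $\omega_1^0\in(0,1)$ makes $\tau_0$ non-degenerate), so $\sigma^2=\bE[s_0^2]>0$.
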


  For the local limit theorem in the random setting we require sufficiently fast polynomial 
  rate of $\alpha$-mixing.
  Define the $\alpha$-mixing coefficients by
  $$
  \alpha(k) = \sup \{  | \bP(A \cap B) - \bP(A) \bP(A) |  \: : \: A \in \cF_{ 0 }^\ell, \: B \in \cF_{\ell + k}^\infty, \: \ell \ge 0
   \},
  $$
  where $\cF_0^\ell = \sigma(  \gamma \circ \Theta^j  \: : \: 0 \le j \le \ell  )$ 
  and $\cF_{\ell + k}^\infty = \sigma(  \gamma \circ \Theta^j  \: : \:  j \ge \ell + k  )$.

  \begin{thm}[Quenched LLT]\label{thm:llt_random} 
  	Let $\bar{\beta}_* > 2$. Assume that the following conditions hold 
  for 
  $$
  q > \max \biggl\{   8,  \frac{2}{ \bar{\beta}_* - 2}   \biggr\}.
  $$
  \begin{itemize}
  	\item[(C1)] $\bE[ \bar{B}^q ] < \infty$
  	where 
  	$$
  	\bar{B} = \begin{cases}
  		\bar{A}, &\text{if $\bar{\beta}_* > 3$} \\
  		\bar{A}', &\text{if $\bar{\beta}_* \in (2,3]$}.
  	\end{cases}
  	$$
  	\item[(C2)] The $\alpha$-mixing coefficients satisfy
  	$$
  	\alpha(n) = O( n^{ - v }  ) \quad \text{with $v > \frac{2q}{q - 8}$}.
  	$$
  	\item[(C3)] The random variable
  	$$
  	\bar{K}(\omega) = \sup_{n \ge 2} \frac{   \gamma(\omega)_n   - \gamma(\omega)_{n+1}  }{  \gamma(\omega)_1 - \gamma(\omega)_2  }
  	+ \frac{  \gamma(\omega)_1  - \gamma(\omega)_2 }{ 1 -  \gamma(\omega)_1 } 
  	$$
  	satisfies $\bE[ \bar{K}^{1 + \delta} ] < \infty$ for some $\delta > 0$.
  \end{itemize}
  Define $h_\ell$ as in Theorem \ref{thm:llt}.
  Then, as $n \to \infty$,
   \begin{align*}
  	P_\omega(X_n = x) = \mu^{-1} \sum_{\ell = 1}^n h_\ell(x) \omega_{n - \ell}^x + 
  	o(n^{-1/2}),
  \end{align*}
  where the error term is uniform in $x$.
  \end{thm}
  
  \begin{remark}
  	In environments of the form \eqref{eq:lsv_environment}, where 
  	$(\alpha_x,\kappa_x,c_x)$ is a realization of a stationary $\alpha$-mixing process 
  	with coefficients decaying as in (C2), corresponding quenched SLLNs, CLTs, and LLTs can be formulated. 
  	In this case, (C3) is automatically satisfied due to expansion, and the moment conditions on 
  	$\bar A$ and $\bar B$ can be expressed in terms of the random parameters 
  	$\alpha_x, c_x$ using \eqref{eq:estim_cn} and \eqref{eq:estimate_diff_cn}.
  \end{remark}

\section{Proofs}\label{sec:proof}

Throughout this section, $C$ denotes a generic constant independent of both 
the temporal variable $n$ and the spatial variable $x$. 
The value of $C$ may change from one appearance to the next. 
We also continue to use the convention introduced at the beginning of 
Section~\ref{sec:results}.

  Recall the definitions of $T_x$ and $\tau_x$ from \eqref{eq:T_tau}. From the definition of 
  $Z_n = (X_n, Y_n)$ in \eqref{eq:walk}, it is clear that, for $x \ge 0$,
  \begin{align}\label{eq:hitting_X}
  	X_n = x \quad \iff \quad T_x \le n < T_{x+1}.
  \end{align}
  Define 
  $$
  T^{ \leftarrow }(n) = \inf \{  x \ge 0 \: : \: T_x \ge n  \}.
  $$
  By \eqref{eq:hitting_X},
  $$
  X_n + 1= T^{ \leftarrow }(n+1).
  $$
  
  \begin{prop}\label{prop:distr_tau} The sequence $(\tau_x)_{x \ge 0}$ is independent, and the distribution of 
  	$\tau_x$ is given by
  	\begin{align}\label{eq:distr_tau}
  		P_\omega( \tau_x = n ) = \omega_{n-1}^x - \omega_{n}^x \quad \forall n \ge 1.
  	\end{align}
  	Moreover, 
  	\begin{align}\label{eq:rel_x_t}
  	P_\omega(X_n = x) = \sum_{k= 1}^n P_\omega ( T_x = k) \omega_{n - k}^x.
  	\end{align}
  \end{prop}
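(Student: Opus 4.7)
The plan is to exploit the block structure of the Markov chain $(Z_n)=(X_n, Y_n)$. By the transition kernel $\cK$, from any state $(x,y)$ with $y>0$ the chain moves deterministically to $(x, y-1)$; only from $(x,0)$ can the horizontal coordinate advance, and at that step a new ``height'' $y_{x+1}$ is sampled with $P_\omega(y_{x+1} = y) = \omega_y^{x+1} - \omega_{y+1}^{x+1}$ for each $y \ge 0$ (recall $\omega_0^{x+1} = 1$). Writing $y_x$ for the value of $Y$ at the hitting time $T_x$, the chain then performs the deterministic descent from $(x, y_x)$ down to $(x, 0)$ in exactly $y_x$ steps before advancing, so $\tau_x = y_x + 1$.

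From this identification, \eqref{eq:distr_tau} follows directly: for $x = 0$ the distribution of $y_0$ is supplied by the initial law in \eqref{eq:walk}, while for $x \ge 1$ it is read off the transition probabilities out of $(x-1, 0)$; in both cases $P_\omega(y_x = n-1) = \omega_{n-1}^x - \omega_n^x$. Independence of the sequence $(\tau_x)_{x\ge 0}$ follows from the Markov property: for $x \ge 1$, the chain always occupies the deterministic state $(x-1, 0)$ at time $T_x - 1$, so the conditional law of $y_x$ (hence of $\tau_x$) given $(Z_0, \dots, Z_{T_x - 1})$ depends only on $(x-1, 0)$ and thus coincides with the marginal law of $\tau_x$. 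Iterating this over $x$ yields full independence.

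For the convolution identity \eqref{eq:rel_x_t}, combine \eqref{eq:hitting_X} with $T_{x+1} = T_x + \tau_x$ to write, for $x \ge 1$,
\begin{align*}
\{X_n = x\} \;=\; \{T_x \le n < T_x + \tau_x\} \;=\; \bigcup_{k=1}^{n} \{T_x = k\} \cap \{\tau_x > n - k\}.
\end{align*}
Since $T_x = \tau_0 + \cdots + \tau_{x-1}$ is independent of $\tau_x$ by the preceding step,
\begin{align*}
P_\omega(X_n = x) \;=\; \sum_{k=1}^n P_\omega(T_x = k)\, P_\omega(\tau_x > n-k),
\end{align*}
and the tail probability telescopes: $P_\omega(\tau_x > m) = \sum_{j \ge m+1}(\omega_{j-1}^x - \omega_j^x) = \omega_m^x$, using $\omega_j^x \to 0$ and $\omega_0^x = 1$. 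Substituting gives \eqref{eq:rel_x_t}.

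The only genuinely delicate point is the rigorous justification of independence of $(\tau_x)$; however, because the state just before each regeneration is the deterministic point $(x-1, 0)$, a single application of the Markov property at the stopping time $T_x$ suffices, and I do not expect a substantial obstacle. The distributional and convolution parts are then immediate computations.
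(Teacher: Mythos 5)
Your proof is correct. You establish the independence and the distribution of $(\tau_x)$ by a route that is related to but technically different from the paper's. The paper fixes candidate values $n_0,\dots,n_x$, rewrites the event $\{\tau_0=n_0,\dots,\tau_x=n_x\}$ as a cylinder event of the chain at \emph{deterministic} times, and factors with the plain Markov property; the regeneration structure (the chain necessarily passes through $(w,0)$ before visiting site $w+1$) makes the cylinder probabilities collapse into a product of the jump probabilities. You instead invoke the strong Markov property at the random stopping time $T_x-1$ (the hitting time of $(x-1,0)$), using the fact that $Z_{T_x-1}=(x-1,0)$ is deterministic so that the conditional law of $Y_{T_x}$ given $\cF_{T_x-1}$ is constant, and then induct over $x$. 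Both approaches rest on the same structural observation; yours is perhaps more probabilistically transparent, while the paper's avoids invoking the strong Markov property at all and delivers the full joint law in one computation. The convolution step is essentially identical to the paper's. One small point worth making explicit: you correctly treat \eqref{eq:rel_x_t} as a statement for $x\ge 1$ (for $x=0$ the sum would have to start at $k=0$ since $T_0=0$ a.s.); the paper's own proof in fact runs the sum from $k=0$, which covers $x=0$ as well, but this has no bearing on the use of the identity in the sequel.
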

  
  \begin{proof}
  	Fix integers $x\ge0$ and $n_0,\dots,n_x\ge1$, and set $m_j =n_0+\cdots+n_j-1$ for $j \ge 0$.
  	Note that
  	\begin{align*}
  		&\{  \tau_0=n_0,\dots,\tau_x=n_x \} = \{  
  		Z_0=(0,n_0-1),\ Z_{m_0}=(0,0),\ Z_{m_0+1}=(1,n_1-1),\ \ldots, \\
  		&Z_{m_{x-1}}=(x-1,0),\ Z_{m_{x-1}+1}=(x,n_x-1)
  		\},
  	\end{align*}
  	so using the Markov property we obtain
  	\begin{align*}
  		&P_\omega(\tau_0=n_0,\dots,\tau_x=n_x) \\
  		&\qquad= P_\omega(Z_0=(0,n_0-1))\,
  		\prod_{w=1}^{x}
  		P_\omega(Z_{m_{w-1}+1}=(w,n_{w}-1)\mid Z_{m_{w-1}}=(w-1,0)).
  	\end{align*}
  	As the transition probabilities are given by $\cK((w,n_{w}-1),(w-1,0))=\omega_{n_{w}-1}^{w}-\omega_{n_{w}}^{w}$, 
  	this implies both the independence of $(\tau_x)_{x\ge0}$ and \eqref{eq:distr_tau}.
  	
  	Finally, using \eqref{eq:hitting_X} and the independence of $T_x = \sum_{w=0}^{x-1} \tau_w$
  	and $\tau_x$
  	,
  	we have
  	\begin{align*}
  		P_\omega(X_n=x)
  		&=\sum_{k=0}^n P_\omega(T_x=k) P_\omega(\tau_x\ge n-k+1) \\
  		&=\sum_{k=0}^n P_\omega(T_x=k)\sum_{j\ge n-k+1}(\omega_{j-1}^{x}-\omega_{j}^{x})
  		=\sum_{k=0}^n P_\omega(T_x=k)\omega_{n-k}^{x},
  	\end{align*}
  	as wanted.
  \end{proof}
  
  In the sequel we assume that $A_x < \infty$ for $x \ge 0$.
  Using \eqref{eq:distr_tau} we obtain the formulas
  \begin{align}\label{eq:m_x}
  m_x = E_\omega( \tau_x ) = \sum_{n=1}^\infty n (   \omega_{n-1}^x - \omega_{n}^x  ) 
  = \sum_{n=0}^\infty \omega_{n}^x
  \end{align}
  and
  \begin{align}\label{eq:mu_x}
  	\mu_x = E_\omega(T_x) = \sum_{w=0}^{x-1}  \sum_{n=0}^\infty \omega_{n}^{w} = x + 
  	\sum_{w=0}^{x-1}  \sum_{n=1}^\infty \omega_{n}^{w}.
  \end{align}
  Set 
  $$
  \mu_{-} = \inf_{x \ge 1} \mu_x / x \quad \text{and} \quad 
  \mu_{+} = \sup_{x \ge 1} \mu_ x /x.
  $$
  Assuming (A1) 
  we have $0 < \mu_{-} \le \mu_{+} < \infty$.
  
  Using \eqref{eq:distr_tau} and the definition of $A_x$ in \eqref{eq:A_x} we see that
  for $n \ge 1$,
  \begin{align}\label{eq:tail_tau}
  	P_\omega( \tau_x \ge n ) = \omega_{n-1}^x \le C A_x n^{ -\beta_* }.
  \end{align}
  Recall from \eqref{eq:Mn} that
  $$
  M_n = (\mu_x)^{ \leftarrow }(n).
  $$
  From
  \eqref{eq:mu_x} it is clear that $M_n \le n$. Further, assuming (A1), it follows 
  from \cite[Lemma A.1]{LS11} that 
  \begin{align}\label{eq:Mn_lim}
  	\lim_{n\to\infty} n^{-1} M_n = \mu^{-1}.
  \end{align}
  
   If $\beta_* > 2$, then a computation using \eqref{eq:distr_tau} gives
  \begin{align}\label{eq:tau_x_moment_2}
  	E_\omega(\tau_x^2) = \sum_{n=1}^\infty n^2  (   \omega_{n-1}^x - \omega_{n}^x  ) 
  	= \sum_{n=0}^\infty (2n + 1) \omega_n^x.
  \end{align}
  Further,
  \begin{align}\label{eq:rel_sx_ax}
  	s_x^2 := \text{Var}_\omega( \tau_x ) = \sum_{n=0}^\infty (2n + 1) \omega_n^x - \biggl[
  	\sum_{n=0}^\infty 
  	\omega_n^x \biggr]^2
  	\le C A_x.	
  \end{align}
  Finally, set
  \begin{align}\label{eq:sigma}
  	\begin{split}
  		\sigma_x^2 &= \text{Var}_\omega(T_x) = \sum_{w=0}^{x-1}  s_x^2 = \sum_{w=0}^{x-1} \biggl\{ 
  		\sum_{n=0}^\infty (2n + 1) \omega_n^w - \biggl[
  		\sum_{n=0}^\infty 
  		\omega_n^w \biggr]^2
  		\biggr\}, \\
  		\sigma_{-}^2 &= \inf_{ x \ge 1 } \sigma_x^2 / x, \quad \sigma_{+}^2 = \sup_{ x \ge 1 } \sigma_x^2 / x.
  	\end{split}
  \end{align}
  For $\beta_* > 2$ we have $\sigma_x^2 < \infty$ and 
  assuming (A2) we have $0 < \sigma_{-}^2 \le \sigma_{+}^2 < \infty$.
  
\subsection{Proof of Theorem \ref{thm:slln}}

\begin{lem}\label{prop:slln_tau} Suppose that the assumptions of Theorem 
\ref{thm:slln} hold. Then,
$$
\lim_{ x \to \infty }  x^{-1} T_x = \mu \quad \text{a.s.}
$$
\end{lem}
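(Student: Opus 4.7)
The plan is to exploit the representation $T_x=\sum_{w=0}^{x-1}\tau_w$ with the $\tau_w$ independent (Proposition~\ref{prop:distr_tau}) and reduce the statement to a SLLN for a centered sum of independent, but not identically distributed, random variables. Since $E_\omega\tau_w=m_w$ and (A1) gives $x^{-1}\sum_{w=0}^{x-1}m_w\to\mu$, it suffices to prove
\[
\frac{1}{x}\sum_{w=0}^{x-1}(\tau_w-m_w)\xrightarrow{\text{a.s.}}0.
\]

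I would carry this out via Kolmogorov-style truncation. Set $\hat\tau_w=\tau_w\mathbf{1}_{\{\tau_w\le w\}}$ and decompose $\tau_w-m_w=(\tau_w-\hat\tau_w)+(\hat\tau_w-E_\omega\hat\tau_w)+(E_\omega\hat\tau_w-m_w)$. For the \emph{large part}, the tail bound \eqref{eq:tail_tau} together with $A_w=O(w^\lambda)$ gives $P_\omega(\tau_w>w)\le CA_w w^{-\beta_*}\le Cw^{\lambda-\beta_*}$; since $\lambda-\beta_*<-1$, this is summable, so by Borel--Cantelli $\tau_w=\hat\tau_w$ for all but finitely many $w$ a.s., and the Ces\`aro average of $\tau_w-\hat\tau_w$ vanishes. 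For the \emph{bias}, $0\le m_w-E_\omega\hat\tau_w=\sum_{n>w}P_\omega(\tau_w\ge n)\le CA_w w^{1-\beta_*}\le Cw^{\lambda+1-\beta_*}\to 0$, so Ces\`aro averaging kills this term as well.

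The heart of the proof is the \emph{centered truncated part}, which I would handle by Kolmogorov's variance criterion: if $\sum_{w\ge 1}\mathrm{Var}_\omega(\hat\tau_w)/w^2<\infty$, then $x^{-1}\sum(\hat\tau_w-E_\omega\hat\tau_w)\to 0$ a.s. To estimate the truncated second moment I would use the identity $E_\omega\hat\tau_w^2\le\sum_{k=1}^w(2k-1)\omega_{k-1}^w$, then split the sum at $k_0=A_w^{1/\beta_*}$ and apply the two bounds $\omega_n^w\le 1$ and $\omega_n^w\le A_w n^{-\beta_*}$. A direct computation gives $E_\omega\hat\tau_w^2\le CA_w^{2/\beta_*}$ when $\beta_*>2$ and $E_\omega\hat\tau_w^2\le CA_w w^{2-\beta_*}$ when $\beta_*\in(1,2]$; in both regimes, $A_w=O(w^\lambda)$ together with $\lambda<\beta_*-1$ makes $\sum_w E_\omega\hat\tau_w^2/w^2$ convergent.

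The main obstacle is step (iii): finding a variance bound for $\hat\tau_w$ whose sum over $w/w^2$ is controlled by the single exponent gap $\beta_*-1-\lambda>0$. The heavy-tailed regime ($\beta_*$ close to $1$) is the delicate one, because the tail bound $\omega_n^w\le A_w n^{-\beta_*}$ barely suffices once combined with the truncation level $w$, and the splitting point $k_0=A_w^{1/\beta_*}$ has to be chosen exactly to balance the two available bounds on $\omega_n^w$. Once Kolmogorov's variance criterion is verified, the three pieces combine to give the desired almost sure convergence of $x^{-1}T_x$ to $\mu$.
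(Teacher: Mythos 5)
Your plan follows the same overall skeleton as the paper's proof---truncate at level $w$, apply Kolmogorov's variance criterion to the centered truncated part, and handle the large part and the bias separately---so this is the same basic route, not a different one, but it is genuinely cleaner at one point. You dispatch the large part $R_w = \tau_w\mathbf{1}_{\{\tau_w > w\}}$ by observing that $\sum_w P_\omega(\tau_w > w) \le C\sum_w w^{\lambda-\beta_*} < \infty$, so by Borel--Cantelli almost surely $R_w = 0$ for all but finitely many $w$, and the Ces\`aro average of the $R_w$ then vanishes outright. The paper instead proves this via a blocking argument along a subsequence $x_k=\lceil k^r\rceil$ with two separate Borel--Cantelli applications and a choice of exponents $r,u,u'$; your one-line argument replaces all of that. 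The bias term is handled identically.

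There is, however, a gap in the Kolmogorov step as you have stated it, and the paper's own estimate shares it. Your split at $k_0 = A_w^{1/\beta_*}$ gives $E_\omega[\hat\tau_w^2] \le C A_w^{2/\beta_*}$ for $\beta_*>2$ (which is in fact sharper than the paper's bound of order $A_w\log w$), but then $\sum_w E_\omega[\hat\tau_w^2]/w^2 \le C\sum_w w^{2\lambda/\beta_*-2}$ converges only when $\lambda<\beta_*/2$, not under the stated hypothesis $\lambda<\beta_*-1$; the paper's bound needs $\lambda<1$, which is even more restrictive when $\beta_*>2$. To cover $\beta_*>2$, $\lambda\in[\beta_*/2,\beta_*-1)$ one has to feed (A1) into the variance estimate: split instead at $N_w=(A_w/m_w)^{1/(\beta_*-1)}$ to get $E_\omega[\hat\tau_w^2]\le C A_w^{1/(\beta_*-1)}m_w^{(\beta_*-2)/(\beta_*-1)}$, then apply H\"older's inequality with exponents $\beta_*-1$ and $(\beta_*-1)/(\beta_*-2)$ to $\sum_w w^{-2}A_w^{1/(\beta_*-1)}m_w^{(\beta_*-2)/(\beta_*-1)}$; since $\sum_{w<x}m_w=O(x)$ by (A1), the resulting condition is exactly $\lambda<\beta_*-1$. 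For $\beta_*\in(1,2]$ your bound $C A_w w^{2-\beta_*}$ already gives the required summability under $\lambda<\beta_*-1$, so that case is fine as you wrote it.
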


\begin{proof} Start by decomposing
$$\tau_x = \hat{\tau}_x + R_x,$$
where 
$$
\hat{\tau}_x = \tau_x \mathbf{1}_{ \{ \tau_x \le x \} } \quad 
\text{and} \quad R_x = \tau_x \mathbf{1}_{ \{ \tau_x > x \} }
$$
Let 
\begin{align}\label{eq:cond_r}
	r >  \frac{1}{\beta_* - 1 - \lambda}.
\end{align}
Let $u, u' \in (0,1)$ be numbers whose values will be determined later.

Let $x \ge 1$. Using \eqref{eq:tail_tau} and \eqref{eq:cond_ax_slln}, we obtain
\begin{align}\label{eq:exp_Rx}
	\begin{split}
	E_\omega(R_x) &= \sum_{n = x + 1}^\infty P_\omega( \tau_x = n ) n
	= (x + 1) P_\omega (  \tau_x \ge x + 1 )  + 
	\sum_{n = x + 2}^\infty P_\omega( \tau_x \ge n ) \\
	&\le C (x + 1)^{  \lambda - \beta_* + 1 }.
		\end{split}
\end{align}
Therefore, 
\begin{align*}
	\sum_{w=0}^{x-1} E_\omega(R_w) \le C \sum_{w = 0}^{x-1} (w + 1)^{ \lambda -  \beta_* + 1  } 
	\le C \max \{ \log(x),  x^{  2 + \lambda - \beta_*  } \}.
\end{align*}
In particular,
\begin{align}\label{eq:exp_R_vanish}
\lim_{x \to \infty} x^{-1} \sum_{w=0}^{x-1} E_\omega(R_w) = 0.
\end{align}

Next, define 
$$
\bar{\tau}_x = \tau_x - E_\omega(\tau_x) \quad \text{and} \quad 
\tilde{\tau}_x = \hat{\tau}_x - E_\omega( \hat{\tau}_x ),
$$
and decompose 
\begin{align*}
	x^{-1}\sum_{w=0}^{x-1} \bar{\tau}_w = 
	x^{-1}  \sum_{w=0}^{x-1} \tilde{\tau}_w  + x^{-1} \sum_{w=0}^{x-1} R_w 
	+ x^{-1} \sum_{w=0}^{x-1} E_\omega( \hat{\tau}_w - \tau_w  ) =: E_1(x) + E_2(x) + E_3(x).
\end{align*}
We will show that each of these three terms converges to zero almost surely.

\noindent\emph{Convergence of $E_3(x)$:} Since $E_{3}(x) = -x^{-1} \sum_{w=0}^{x-1} E_\omega(R_w) $, we have already established in \eqref{eq:exp_R_vanish} that
$$
\lim_{x \to \infty} E_3(x) = 0. 
$$

\noindent\emph{Convergence of $E_2(x)$:} Set $x_k = \lceil k^r \rceil$. Using 
Markov's inequality and \eqref{eq:exp_Rx}, we obtain 
\begin{align*}
	P_\omega \biggl(  
	\sum_{w=0}^{x_k - 1} R_w \ge x_k^u \biggr)
	\le C x_k^{-u} \sum_{w=0}^{x_k - 1} 
	(w + 1)^{\lambda - \beta_* + 1} 
	\le C k^{-r u}  \max\{  \log(k + 1), k^{ r( 2 + \lambda -  \beta_*   ) }
	\}
\end{align*}
for any $k \ge 1$. It follows from \eqref{eq:cond_r} that this upper bound is summable 
whenever $u$ is sufficiently close to $1$. Fix such $u$. Then, by the Borel--Cantelli Lemma,
\begin{align}\label{eq:first_bc}
	\lim_{k \to \infty} x_k^{-1} \sum_{w=0}^{x_k - 1} R_w = 0 \quad \text{a.s.}
\end{align}

Next, for $x_k \le x < x_{k+1}$, we estimate
\begin{align}\label{eq:decomp_bc}
\sum_{w=0}^{x-1} R_w \le \sum_{w=0}^{x_k-1} R_w + \sum_{w=x_k}^{x_{k+1}-1} R_w.
\end{align}
Using \eqref{eq:exp_Rx} and $r > 1$,
\begin{align*}
	E_\omega \biggl[  \sum_{w=x_k}^{x_{k+1}-1} R_w \biggr]
	&\le C \sum_{w=x_k}^{x_{k+1}- 1} (w + 1)^{ \lambda  - \beta_* + 1}
	\le C ( x_{k+1} - x_k )  x_{k}^{ \lambda - \beta_*  + 1 }  
	\le C k^{r - 1 + r (  \lambda - \beta_* + 1  ) }.
\end{align*}
Consequently,
\begin{align*}
	P_\omega \biggl(  \sum_{w=x_k}^{x_{k+1}-1} R_w \ge x_k^{u'} \biggr) 
	\le C k^{ - u' r }  k^{r - 1 + r (  \lambda - \beta_* + 1  ) } 
	= C k^{  r(  2 - u' + \lambda - \beta_* ) - 1 }.
\end{align*}
We fix $u'$ sufficiently close to $1$ such that
\begin{align*}
	2 - u' + \lambda -  \beta_*  < 0,
\end{align*}
which is possible by our assumption that $\lambda < \beta_* - 1$.
Then it follows by the Borel--Cantelli lemma that
\begin{align}\label{eq:second_bc}
\lim_{k \to \infty} x_k^{-1} \sum_{w=x_k}^{x_{k+1}-1} R_w = 0 \quad \text{a.s.}
\end{align}
Combining \eqref{eq:first_bc}, \eqref{eq:decomp_bc}, and \eqref{eq:second_bc}, we have
$$
\lim_{x \to \infty} E_2(x) = 0 \quad \text{a.s.}
$$

\noindent\emph{Convergence of $E_1(x)$:} By \eqref{eq:tail_tau} and \eqref{eq:cond_ax_slln},
\begin{align*}
	E_\omega( \tilde{\tau}_x^2 ) &\le E_\omega(  \hat{\tau}_x^2  )
	\le C \sum_{n=1}^{x} n  P_\omega( \tau_x  \ge n  ) 
	= C A_x \sum_{n=1}^{x} n^{ 1 - \beta_*  } 
	\le C \max \{  \log (x)  , x^{  2 - \beta_* }  \} x^\lambda.
\end{align*}
Consequently,
$$
\sum_{ x = 1 }^\infty x^{-2} E_\omega( \tilde{\tau}_x^2 ) < \infty,
$$
since $\lambda < \min\{ 1, \beta_* - 1 \}$.
Now it follows from Kolmogorov's variance criterion that 
$$
\lim_{x \to \infty} E_1(x) = 0 \quad  \text{a.s.}
$$

We have shown that
$$
\lim_{x \to \infty} x^{-1}\sum_{w=0}^{x-1} \bar{\tau}_w = 0 \quad \text{a.s.}
$$
Therefore, $x^{-1} T_x \to \mu$ a.s. follows by (A1).

\end{proof}

\begin{proof}[Proof of Theorem \ref{thm:slln}] Recall that $X_n + 1 = T^{\leftarrow}(n+1)$.
Therefore, by \cite[Lemma A.1]{LS11}, 
\begin{align*}
	\lim_{x \to \infty} x^{-1} T_x = \mu \iff \lim_{n \to \infty} n^{-1} X_n = \mu^{-1}.
\end{align*}
We obtain Theorem \ref{thm:slln} by combining this relation with the result of
Proposition \ref{prop:slln_tau}.
\end{proof}

\subsection{Proof of Theorem \ref{thm:clt}} 
First, we establish a CLT for $(T_x)$ by verifying the Lindeberg condition, and then deduce 
from this a CLT for $(X_n)$.

\begin{lem}\label{lem:clt_T} Suppose that the 
	assumptions in Theorem \ref{thm:clt} hold. Then,
\begin{align}\label{eq:clt_Tx}
	\frac{ T_x - \mu_x }{ \sigma_x } \stackrel{\cD}{\to} N(0,1), \quad \text{as $x \to \infty$},
\end{align}
where $\stackrel{\cD}{\to}$ means convergence in distribution.
\end{lem}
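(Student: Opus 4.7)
The plan is to deduce \eqref{eq:clt_Tx} from the Lindeberg--Feller CLT applied to the independent (but not identically distributed) sequence $(\tau_w - m_w)_{w \ge 0}$ under $P_\omega$. Recall that
$$
T_x - \mu_x = \sum_{w=0}^{x-1}(\tau_w - m_w),
$$
the summands are independent by Proposition~\ref{prop:distr_tau}, and $\sigma_x^2 = \sum_{w=0}^{x-1} s_w^2$ by \eqref{eq:sigma}. Assumption (A2) immediately gives $\sigma_x^2 \to \infty$, so the remaining task is to verify the Lindeberg condition
$$
L_x(\ve) := \sigma_x^{-2}\sum_{w=0}^{x-1} E_\omega\bigl[(\tau_w - m_w)^2 \mathbf{1}_{\{|\tau_w - m_w| > \ve \sigma_x\}}\bigr] \;\to\; 0 \qquad (\forall\,\ve > 0).
$$

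The key ingredient is a truncated second-moment estimate for $\tau_w$. Using the tail bound $P_\omega(\tau_w \ge n) \le C A_w n^{-\beta_*}$ from \eqref{eq:tail_tau} together with Abel summation in $E_\omega[\tau_w^2 \mathbf{1}_{\tau_w \ge K}] = \sum_{n \ge K} n^2 (\omega_{n-1}^w - \omega_n^w)$, and the fact that $N^2 \omega_N^w \to 0$ because $\beta_* > 2$, I obtain
$$
E_\omega[\tau_w^2 \mathbf{1}_{\tau_w \ge K}] \;\le\; C A_w K^{2-\beta_*}.
$$
Next I handle the shift by $m_w$. Since $m_w = \sum_{n \ge 0} \omega_n^w \le C A_w$ and $A_w = o(\sqrt{x})$ uniformly in $w < x$ by \eqref{eq:ax_ord_clt}, while $\sigma_x \asymp \sqrt{x}$ by (A2), for all sufficiently large $x$ one has $m_w \le \ve \sigma_x / 2$ for every $w < x$. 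On the event $\{|\tau_w - m_w| > \ve \sigma_x\}$ this forces $\tau_w > \ve \sigma_x / 2$, and $(\tau_w - m_w)^2 \le 2\tau_w^2 + 2 m_w^2$, so
$$
E_\omega\bigl[(\tau_w - m_w)^2 \mathbf{1}_{\{|\tau_w - m_w| > \ve \sigma_x\}}\bigr] \;\le\; C_\ve\, A_w\, \sigma_x^{\,2-\beta_*}.
$$

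Summing and using $A_w \ge 1$ (so $A_w \le A_w^2$), and then the assumption $\sum_{w=0}^{x-1} A_w^2 = o(x^{\beta_*/2})$ together with $\sigma_x^{\,\beta_*} \asymp x^{\beta_*/2}$, I obtain
$$
L_x(\ve) \;\le\; C_\ve\, \sigma_x^{-\beta_*} \sum_{w=0}^{x-1} A_w \;\le\; C_\ve\, \sigma_x^{-\beta_*} \sum_{w=0}^{x-1} A_w^2 \;=\; C_\ve\, x^{-\beta_*/2}\,o\bigl(x^{\beta_*/2}\bigr) \;=\; o(1).
$$
This verifies the Lindeberg condition, and the Lindeberg--Feller theorem yields \eqref{eq:clt_Tx}.

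The main obstacle is the Lindeberg step itself: one must carefully combine the $w$-dependent tail estimate (which is sharp only through the factor $A_w$) with the truncation level $\ve \sigma_x$, while also controlling the centering $m_w$. The choice of bounding $A_w$ by $A_w^2$ is what makes the assumption \eqref{eq:ax_ord_clt} exactly sufficient; this is the only non-routine accounting, and it is what pins down why $\beta_* > 2$ and the stated moment growth together suffice.
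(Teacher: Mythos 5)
Your proof is correct and follows essentially the same route as the paper: verify the Lindeberg condition for the independent sequence $(\tau_w - m_w)_w$ using the tail bound $P_\omega(\tau_w \ge n)\le C A_w n^{-\beta_*}$ together with Abel summation, and control the truncation level via $A_w = o(\sqrt{x})$ and $\sigma_x^2 \asymp x$. The only cosmetic difference is that you split $(\tau_w - m_w)^2 \le 2\tau_w^2 + 2m_w^2$ before summing by parts (and in fact arrive at a per-term bound linear in $A_w$, which you then weaken to $A_w^2$ to invoke \eqref{eq:ax_ord_clt}), whereas the paper sums by parts directly on the centered quantity and lands on $C A_w^2 x^{1-\beta_*/2}$; both yield the same conclusion.
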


\begin{proof} By the well-known Lindeberg condition for the CLT,
it suffices to prove that, for each $\ve > 0$,
\begin{align*}
	\lim_{x \to \infty} \sigma_x^{-2} \sum_{w=0}^{x-1} E_\omega[ ( \tau_w - m_w  )^2 \mathbf{1}_{ | \tau_w - m_w| \ge \ve \sigma_x }  ] = 0.
\end{align*}
Since $\sigma_x^2 \in [ \sigma_{-} x, \sigma_{+} x ]$ where $\sigma_{-}^2 > 0$ by (A2),
it suffices to prove
\begin{align}\label{eq:lindeberg-1}
	\lim_{x \to \infty} x^{-1} \sum_{w=0}^{x-1} E_\omega[ ( \tau_w - m_w  )^2 \mathbf{1}_{ | \tau_w - m_w| \ge \ve \sqrt{x} } ] = 0.
\end{align}
Further, since we are assuming that $A_x = o( \sqrt{x} )$,
\begin{align*}
	\sum_{w=0}^{x-1} E_\omega[ ( \tau_w - m_w  )^2 \mathbf{1}_{ | \tau_w - m_w| \ge \ve \sqrt{x} } ] = \sum_{w=0}^{x-1} E_\omega[ ( \tau_w - m_w  )^2 \mathbf{1}_{ \tau_w \ge m_w + \ve \sqrt{x} } ]
\end{align*}
holds whenever $x$ is sufficiently large.

Using summation by parts 
together with $m_w \le CA_w$,
\begin{align*}
	&E_\omega[ ( \tau_w - m_w  )^2 \mathbf{1}_{ \tau_w \ge m_w + \ve  \sqrt{x} } ] \\
	&\le ( \lceil m_w + \ve \sqrt{x} \rceil - m_w )^2 \omega_{  \lceil m_w + \ve \sqrt{x} \rceil - 1 }^w + \sum_{n=  \lceil m_w + \ve \sqrt{x} \rceil  }^\infty \omega_n^w [ (n+1 - m_w)^2 - (n - m_w)^2 ] \\
	&\le C\biggl( x A_w (  m_w + \ve \sqrt{x} )^{ - \beta_* } + A_w  m_w \sum_{n=  \lceil m_w + \ve \sqrt{x} \rceil }^\infty 
	n^{-\beta_* + 1}
	   \biggr)  \le C   A_w^2 x^{  1 - \beta_* / 2 }.
\end{align*}
This yields \eqref{eq:lindeberg-1} assuming \eqref{eq:ax_ord_clt}.
\end{proof}

\begin{proof}[Proof of Theorem \ref{thm:clt}]
 Write $c_n(x) = x  \tilde{\sigma}\sqrt{n} + n \mu^{-1}$.
Using the relation
$$
X_n \le x \iff T_{x + 1} > n,
$$
we find that
\begin{align*}
	P_\omega \biggl( \frac{X_n -  n \mu^{-1} }{ \tilde{\sigma} \sqrt{n} } \le x \biggr) &= 
	P_\omega (  X_n \le \round{ c_n(x) } ) 
	\\
	&= 
	1 - P_\omega \biggl(  
	\frac{ T_{  \round{ c_n(x)} + 1 } - \mu_{ \round{c_n(x)} + 1 } }{ \sigma_{ \round{c_n(x)} + 1 } } 
	\le \frac{n - \mu_{ \round{c_n(x)} + 1 } }{ \sigma_{ \round{ c_n(x) } + 1 } }
	\biggr).
\end{align*}
Denote by $\Phi(z)$ the distribution function of $N(0,1)$.
If $(Z_n)$ is a sequence of random variables with $Z_n \stackrel{\cD}{\to} N(0,1)$, and if $(z_n)$ is a sequence of real numbers with $z_n \to z$, we have that 
$P(Z_n \le z_n) \to \Phi(z)$ by the continuity of $\Phi(z)$.
Therefore, by Lemma \ref{lem:clt_T}, it suffices to prove
\begin{align}\label{eq:limit_scaled_norm}
	\frac{n - \mu_{ \round{c_n(x)} + 1 } }{ \sigma_{ \round{ c_n(x) } + 1 } } 
	\to - x, \quad n \to \infty.
\end{align}
Recall that $\tilde{\sigma}^2 = \sigma^2 / \mu^3$. By (A1) and (A2) we have 
$(\mu c_n(x))^{-1} \mu_{ \round{c_n(x)} + 1 } \to 1$ and 
$ \sigma^{-2} c_n(x)^{-1} \sigma_{ \round{ c_n(x) } + 1  }^2 \to 1$ as $n \to \infty$. Moreover,
as $n \to \infty$,
\begin{align*}
	\frac{n - \mu c_n(x)}{ \sigma ( c_n(x) )^{1/2} } 
	= - \frac{   x \tilde{\sigma} \sqrt{n} \mu  }{ \sigma \sqrt{ x  \tilde{\sigma} n^{1/2}  + n \mu^{-1} } } \to - \frac{x \tilde{\sigma} \mu  }{ \sigma \mu^{-1/2} } = -x.
\end{align*}
Combining these properties gives \eqref{eq:limit_scaled_norm}.
\end{proof}

\subsection{Proof of Theorem \ref{thm:llt}}\label{sec:llt_proof} 
Let $\beta_* > 2$. Define
\begin{align*}
	f_x(n) &= \phi_{ \mu_x, \sigma_x^2 }(n) = ( 2 \pi \sigma_x^{2}  )^{-1/2} e^{  - \frac{( n - \mu_x )^2}{ 2 \sigma_x^2 }   }, \\
	g_x(n) &= ( 2 \pi  n \sigma^2 / \mu  )^{-1/2} e^{   - \frac{(n - \mu_x)^2}{ 2 n \sigma^2 / \mu }    }, \\
	h_n(x) &= \phi_{ M_n, n \tilde{\sigma}^2 }(x) = ( 2 \pi n \tilde{\sigma}^2 )^{-1/2} e^{ - \frac{(x - M_n)^2}{2 n \tilde{\sigma}^2}  },
\end{align*}
where $\tilde{\sigma}^2 = \sigma^2 / \mu^3$. The proof of the LLT \eqref{eq:local_limit} is based on 
the strategy of \cite{LS11}. We begin by outlining the main steps.

\begin{itemize}[leftmargin=20pt]
	\item[(1)] \emph{Local limit theorem for $T_x$, Lemmas \ref{lem:llt_hitting} and  \ref{lem:estim_e1}.} Using the representation 
	$T_x = \sum_{w = 0}^{x-1} \tau_w$, we apply a Gaussian LLT from \cite{KLN23} in the case of independent variables with 
	finite $2 + \delta$ moments to obtain
	\begin{align*}
	P_\omega( T_x = n ) = f_x(n) + O(x^{ - \frac{1 + \delta}{2}  }), \quad \text{as $x \to \infty$},
	\end{align*}
	for any $\delta \in (0, \min \{ \beta_* - 2, 1 \} )$.
	From this we deduce that
	\begin{align*}
	\sup_{x \ge 1} x^\eta |  P_\omega( T_x = n ) - f_x(n)   | = O(n^{ \eta - \frac{1 + \delta}{2} }), 
	\quad \text{as $n \to \infty$}.
	\end{align*}
	
	\item[(2)] \emph{Approximating $f_x(n)$ by $g_x(n)$, Lemma \ref{lem:estim_e2}.} We show that 
	\begin{align*}
	\max_{1 \le x \le n} x^\eta | f_x(n)  - g_x(n)  | = o(n^{-1/2}), \quad \text{as $n \to \infty$}.
	\end{align*}
	%which allows us to replace $\sigma_x^2$ with $\sigma^2 / \mu$ in the LLT for $T_x$.
	%\smallskip 
	\item[(3)] \emph{Approximating $g_x(n)$ by
	$\mu^{-1}h_n(x)$, Lemma \ref{lem:estim_e3}.} 
	Using \eqref{eq:L_ass}, 
	we show that 
	\begin{align*}
		\sup_{1 \le x \le n} x^\eta | g_x(n) - \mu^{-1} h_n(x) | = o(n^{-1/2}), \quad \text{as $n \to \infty$}.
	\end{align*}
	Moreover, using \eqref{eq:Mn_lim} we obtain
	$$
	\sup_{x > n} x^\eta h_n(x)  = o(n^{-1/2}), \quad \text{as $n \to \infty$}.
	$$
	\item[(4)] \emph{Inverting the LLT for $T_x$ to obtain an LLT for $X_n$.} Combining the estimates from steps (1)-(3) we derive
	\begin{align*}
		\sup_{x \ge 1} x^\eta \biggl| P_\omega(T_x = n) - \mu^{-1} h_n(x) \biggr| = O(n^{ \eta - \frac{1 + \delta}{2} }) 
		+ o(n^{-1/2}), \quad \text{as $n \to \infty$.}
	\end{align*}
	We then apply \eqref{eq:rel_x_t} to deduce the desired LLT for $(X_n)$.
\end{itemize}

We now proceed to implement the above steps rigorously.  
To this end, we decompose
 \begin{align}\label{eq:decomp}
 	\begin{split}
 	&P_\omega(T_x = n) - \mu^{-1} h_n(x) \\
 	&=  [P_\omega(T_x = n) - f_x(n)]
 	+ [ f_x(n) - g_x(n) ] + [g_x(n) - \mu^{-1} h_n(x) ] \\
 	&=: E_1(x,n) + E_2(x,n) + E_3(x,n).
 	\end{split}
 \end{align} 
 
 \subsubsection{Estimate on $E_1(x,n)$} 
 Recall the definitions of $A_x$ and $A_x'$ in \eqref{eq:A_x}.
 The following lemma establishes a local limit theorem 
 for $T_x$.

 \begin{lem}\label{lem:llt_hitting} Suppose that (A2) holds, and that
	\begin{align}\label{eq:aperiod}
		K_x = \sup_{n \ge 2} \frac{  \omega_{n}^x - \omega_{n+1}^x }{ \omega_1^x - \omega_2^x }
		+ \frac{ \omega_1^x - \omega_2^x  }{ 1 - \omega_1^x }
	\end{align}
	satisfies
	$$
	\frac{1}{x} \sum_{w = 0}^{x-1} K_w = O(1).
	$$
	Moreover, assume that
	\begin{align}\label{eq:ass_B}
		\frac1x \sum_{w=0}^{x-1} B_w^{3 + \ve} = O(1)
	\end{align}
	for some $\ve > 0$, where 
	\begin{align*}
		B_w = \begin{cases}
			A_w', &\text{if $\beta_* \in (2,3]$,} \\
			A_w, &\text{if $\beta_* > 3$.}
		\end{cases}
	\end{align*}
	Then, for any $\delta \in (0, \min \{ \beta_* - 2, 1 \} )$,
	$$
	\sup_{n \ge 0} |E_1(x,n)|  = O( x^{ - \frac{1 + \delta }{2} }  ), \quad \text{as $x \to \infty$.} 
	$$
\end{lem}

\begin{proof} Recall from Proposition \ref{prop:distr_tau} that $(\tau_x)_{x \ge 0}$ is an independent 
	sequence, and that
	$$
	T_x = \sum_{w=0}^{x-1} \tau_w, \quad \mu_x = E_\omega(T_x) = \sum_{w = 0}^{x-1} 
	m_w, \quad \sigma_x^2 = \text{Var}_\omega(T_x),
	$$
	where $m_w = E_\omega(\tau_w)$.
	Let $\delta \in (0, \min \{ \beta_* - 2, 1 \} )$. In the rest of this proof, we denote by $C_\delta$ 
	a generic positive constant depending on $\delta$, independent of $n$ and $x$.
	Since $\beta_* > 2$, it follows from \eqref{eq:tail_tau} that
	\begin{align}\label{eq:moment_tau}
		E_\omega(\tau_x^{2 + \delta}) \le 
		C_\delta \sum_{n=1}^\infty n^{1 + \delta} P_\omega(  \tau_x \ge n ) \le 
		C_\delta A_x \sum_{n=1}^\infty n^{ 1 + \delta  - \beta_* } < \infty.
	\end{align}
	Moreover, from (A2) it follows that there exists $x_0 \ge 1$ such that 
	$\sigma_x^2 > 1$ whenever $x \ge x_0$.
	Therefore, an application of \cite[Theorem 1.1]{KLN23} (with $a = 0$ and $d = 1$) yields the following upper bound for any 
	$x \ge x_0$:
	\begin{align}\label{eq:bound_E1-1}
		\begin{split}
			\sup_{n \ge 0} | E_1(x,n) | &\le  
			C_\delta \biggl(  
			\sum_{w=0}^{x-1}  E_\omega[ \tau_w ^{2 + \delta} ]
			\biggr)^{ - \frac{1 + \delta}{2}  } 
			+ C_\delta \sigma_x^{-4} \biggl( \sum_{w=0}^{x-1} E[  \tau_w^{2 + \delta} ]  \biggr)^{ \frac{3 - \delta}{2} } \\
			&+ C_\delta \sigma_x^{-2} \gamma_x^{-1} e^{ - \frac{ \sigma_x^2 \gamma_x^2 }{2}  } 
			+ C_\delta \gamma^{-1}_x \iota_x^{-1} e^{ - \frac{\gamma^2_x \iota_x }{\pi}  } = : 
			I + II + III + IV,
		\end{split}
	\end{align}
	where
	$$
	\gamma_x = 3^{ - 1/\delta } \biggl(  \sum_{w = 0}^{x-1} E[\tau_w^{2 + \delta}]   \biggr)^{ - \frac{1}{2 + \delta} },
	$$
	and 
	$$
	\iota_x = \sum_{w=0}^{x-1} 2 \sum_{n \ge 1} P_\omega( \tau_w = n ) P_\omega( \tau_w = n + 1 ).
	$$
	
	It remains to estimate the four terms in \eqref{eq:bound_E1-1}.
	Using (A2) and $\tau_w \ge 1$ we obtain the following for any $x \ge 1$:
	\begin{align*}
		0< x \sigma_{-}^2 \le 
		\sigma_x^2 = \text{Var}_\omega(T_x) \le \sum_{w=0}^{x-1}  E_\omega[ \tau_w^2 ] \le \sum_{w=0}^{x-1}  E_\omega[  \tau_w^{2 + \delta} ].
	\end{align*}
	Consequently, 
	\begin{align}
		|I| \le C x^{ - \frac{1 + \delta }{2} }.
	\end{align}
	
	Recalling \eqref{eq:rel_ax_ax'}, it follows from \eqref{eq:ass_B} that 
	\begin{align}\label{eq:avg_A_ord}
		x^{-1} \sum_{w=0}^{x-1} A_w = O(1).
	\end{align}
	Combined with \eqref{eq:moment_tau} and (A2), this gives
	$$
	|II| \le C x^{-2} \biggl(  \sum_{w=0}^{x-1} A_w  \biggr)^{ \frac{3 - \delta }{2} } 
	= C x^{  -\frac{1 + \delta}{2}  } 
	\biggl( x^{-1}  \sum_{w=0}^{x-1} A_w  \biggr)^{ \frac{3 - \delta }{2} } \le C_\delta x^{ - \frac{1 + \delta }{2} }.
	$$
	Using \eqref{eq:moment_tau} and \eqref{eq:avg_A_ord} we also find that
	\begin{align}\label{eq:gamma_x_lb}
		\gamma_x \ge C_\delta' x^{ - \frac{1}{2 + \delta} },
	\end{align}
	for some constant $C_\delta' > 0$ depending on $\delta > 0$. So,
	\begin{align}\label{eq:estim_iii}
		|III| \le C_\delta x^{  - 1  + \frac{1}{2 + \delta}  } \exp \biggl(  - C'_\delta   x^{  1  -  \frac{2}{2 + \delta}  }  \biggr).
	\end{align}
	for some constants $C_\delta, C_\delta' > 0$. In particular,
	$$
	|III| = O(  x^{ - \frac{1 + \delta }{2} }  ), \quad \text{as $x \to \infty$}.
	$$
	
	It remains to estimate $IV$. For this we use \eqref{eq:aperiod} and (A2). For brevity, denote
	$$
	S_x 
	= \sum_{w=0}^{x-1} P_\omega( \tau_w = 1 )   P_\omega( \tau_w = 2 )  
	= \sum_{w=0}^{x-1} (1 - \omega_1^w) (\omega_{1}^w - \omega_2^w).
	$$
	We decompose
	\begin{align*}
		\text{Var}_\omega(\tau_w) &= \sum_{n=1}^\infty (n -  m_w )^2 P_\omega(\tau_w = n) = (1 - m_w)^2 (1 - \omega_1^w)
		+  \sum_{n=2}^\infty (n - m_w)^2 ( \omega_{n - 1}^w - \omega_{n}^w ) \\
		&=: R_1(w) + R_2(w).
	\end{align*}
	For $r \in (0, 1/8)$,
	\begin{align*}
		R_1(w) &= \biggl\{  1  - \sum_{n=1}^\infty n( \omega_{n-1}^w - \omega_{n}^w )  \biggr\}^2 (1 - \omega_1^w)
		\le  \biggl\{  \sum_{n=2}^\infty n( \omega_{n-1}^w - \omega_{n}^w )  \biggr\}^2 (1 - \omega_1^w) \\
		&\le C_r (K_w + 1)^{2r}  [ (1 - \omega_1^w) ( \omega_1^w - \omega_2^w ) ]^{2r} \biggl\{  \sum_{n=2}^\infty n( \omega_{n-1}^w - \omega_{n}^w )^{1-r}  \biggr\}^2,
	\end{align*}
	where \eqref{eq:aperiod} was used in the last inequality and 
	$C_r > 0$ is a constant depending on $r$ but independent of 
	$x$ and $n$.
	Applying (generalized) H\"{o}lder's inequality 
	and $\frac{1}{x} \sum_{w = 0}^{x-1} K_w = O(1)$
	we then obtain
	\begin{align*}
		\frac1x \sum_{w=0}^{x-1} R_1(w) &\le C_r (x^{-1} S_x  )^{2r} 
		\biggl[ 1 + x^{-1}\sum_{w=0}^{x-1} K_w \biggr]^{2r}
		\biggr[ 
		x^{-1}
		\sum_{w=0}^{x-1} 
		\biggl\{  \sum_{n=2}^\infty n( \omega_{n-1}^w - \omega_{n}^w )^{1-r}  \biggr\}^{ \frac{2}{1-4r}  } 
		\biggr]^{  1 - 4r } \\
		&\le C_r (x^{-1} S_x  )^{2r} 
		\biggr[ 
		x^{-1}
		\sum_{w=0}^{x-1} 
		\biggl\{  \sum_{n=2}^\infty n( \omega_{n-1}^w - \omega_{n}^w )^{1-r}  \biggr\}^{ \frac{2}{1-4r}  } 
		\biggr]^{  1 - 4r }.
	\end{align*}
	
	Further, we use \eqref{eq:aperiod} to derive
	\begin{align*}
		R_2(w) &\le \sum_{n=2}^\infty n^2 ( \omega_{n-1}^w - \omega_{n}^w ) 
		+ m_x^2 \sum_{n=2}^\infty ( \omega_{n-1}^w - \omega_{n}^w) \\
		&\le C_r (K_w + 1)^{6r} [ (1 - \omega_1^w) ( \omega_1^w - \omega_2^w ) ]^{2r} \\
		&\times \biggl[
		\sum_{n=2}^\infty n^2 ( \omega_{n-1}^w - \omega_{n}^w )^{1-4r} 
		+ m_w^2 \sum_{n=2}^\infty ( \omega_{n-1}^w - \omega_{n}^w )^{1-4r}
		\biggr],
	\end{align*}
	and once more apply H\"{o}lder's inequality together with $\frac{1}{x} \sum_{w = 0}^{x-1} K_w = O(1)$:
	\begin{align*}
		\frac1x \sum_{w=0}^{x-1} R_2(w)
		&\le C_r (x^{-1} S_x)^{2r} 
		\biggl\{ x^{-1} \sum_{w=0}^{x-1} 
		\biggl[
		\sum_{n=2}^\infty n^2 ( \omega_{n-1}^w - \omega_{n}^w )^{1-4r} \\
		&+ m_w^2 \sum_{n=2}^\infty ( \omega_{n-1}^w - \omega_{n}^w )^{1-4r}
		\biggr]^{ \frac{1}{1-8r} }
		\biggr\}^{1 - 8r}.
	\end{align*}
	Gathering,
	\begin{align*}
		\frac{1}{x} \sum_{w=0}^{x-1} \text{Var}_\omega(\tau_w) \le 
		C_r ( x^{-1} S_x )^{2r} \biggl\{  
		E_1^{ 1 - 4r } + E_2^{1-8r}
		\biggr\},
	\end{align*}
	where 
	\begin{align*}
		E_1 &= x^{-1}
		\sum_{w=0}^{x-1} 
		\biggl[  \sum_{n=2}^\infty n( \omega_{n-1}^w - \omega_{n}^w )^{1-r}  \biggr]^{ \frac{2}{1-4r}  }, \\
		E_2 &= x^{-1} \sum_{w=0}^{x-1} 
		\biggl[
		\sum_{n=2}^\infty n^2 ( \omega_{n-1}^w - \omega_{n}^w )^{1-4r} + m_w^2 \sum_{n=2}^\infty ( \omega_{n-1}^w - \omega_{n}^w )^{1-4r}
		\biggr]^{ \frac{1}{1-8r} }.
	\end{align*}
	
	Now, if $\beta_* \in (2,3]$, then using $m_w \le C A_w'$ we obtain
	\begin{align}\label{eq:estim_e2}
		\begin{split}
			E_2
			&\le x^{-1} \sum_{w=0}^{x-1} 
			\biggl[
			C (A_w')^{  1 - 4r }
			\sum_{n=2}^\infty n^{2 - (1 + \beta_*)(1 - 4r)}
			+ C ( A_w' )^{3 - 4r} \sum_{n=2}^\infty 
			n^{- (1 + \beta_*)(1 - 4r)}
			\biggr]^{ \frac{1}{1-8r} } \\
			&\le C_r x^{-1} \sum_{w=0}^{x-1} (A_w')^{ \frac{3- 4r}{ 1 - 8r} },
		\end{split}
	\end{align}
	for sufficiently small $r$ depending only on $\beta_*$.
	A similar computation gives $$E_1 \le C_r x^{-1} \sum_{w=0}^{x-1} (A_w')^{ \frac{1 - r}{ 1 - 4r} }.$$
	In particular, it follows from \eqref{eq:ass_B} that 
	$E_1^{1-4r} + E_2^{1-8r} = O(1)$ whenever $r$ is sufficiently small. Fixing such $r$, we have 
	by (A2)
	that
	$S_x \ge c_0 x$ for some constant $c_0 > 0$, so that 
	$\iota_x \ge 2 S_x \ge 2 c_0 x$. Combining this inequality with
	\eqref{eq:gamma_x_lb}, we conclude that
	\begin{align*}
		|IV| \le C_\delta x^{ -1 + \frac{1}{2 + \delta}  } \exp \biggl( - C_\delta' x^{ 1 - \frac{2}{2 + \delta}  }  \biggr).
	\end{align*}
	Hence,
	$$
	|IV| = O (x^{  - \frac{1 + \delta}{2}  }), \quad \text{as $x \to \infty$.}
	$$
	
	Finally, if $\beta_* > 3$, then in place of \eqref{eq:estim_e2} we simply use
	\begin{align*}
		E_2 &\le x^{-1} \sum_{w=0}^{x-1} 
		\biggl[
		\sum_{n=2}^\infty n^2 ( \omega_{n-1}^w )^{1-4r} + m_w^2 \sum_{n=2}^\infty  (\omega_{n-1}^w)^{1-4r}
		\biggr]^{ \frac{1}{1-8r} } \\
		&\le C_r x^{-1} \sum_{w=0}^{x-1} (A_w)^{ \frac{3- 4r}{ 1 - 8r} } = O(1),
	\end{align*}
	whenever $r$ is sufficiently small.
	
\end{proof}

\begin{lem}\label{lem:estim_e1} Suppose that the environment satisfies (A1) in
	addition to
	the assumptions in Lemma \ref{lem:llt_hitting}. 
	Then,  for any $\delta \in (0, \min \{ \beta_* - 2, 1 \} )$ and any $\eta \in [0, 1/2)$, 
	\begin{align}\label{eq:e1_estim}
		\sup_{x \ge 1} x^\eta | E_1(x,n) | = O(n^{\eta - \frac{1 + \delta}{2} }), \quad \text{as $n \to \infty$.}
	\end{align}
\end{lem}

\begin{proof}
For any $\ve \in (0, \mu^{-1})$, it follows by Lemma \ref{lem:llt_hitting} that
\begin{align}\label{eq:ee1}
	\sup_{ x \ge  \round{ \ve n } } x^\eta | P_\omega(T_x = n) - f_x(n) | = O(n^{\eta - \frac{1 + \delta}{2} }),
\end{align}
as $n \to \infty$.

Then suppose that $x \le \round{\ve n}$. By (A1), there exist a constant $c_0 \in (0,1)$ and an integer $n_0 \ge 1$ such that 
$\mu_{ \round{ \ve n} } \le c_0 n$ whenever $n \ge n_0$. For such $n$, we have
\begin{align}\label{eq:estim_1}
	\frac{(n - \mu_x)^2}{\sigma_x^2} \ge \frac{( n - \mu_{ \round{\ve n} } )^2}{ \sigma_+^2 \ve n } \ge \frac{(1 - c_0)^2}{\sigma_+^2 \ve } n
\end{align}
with $\sigma_+^2 \in (0, \infty)$, where (A2) was used in the first inequality.
Therefore, by Markov's inequality,
\begin{align}\label{eq:ee2}
\begin{split}
x^\eta P_\omega(T_x = n) &\le x^\eta P_\omega( |T_x - \mu_x| \ge |n - \mu_x| )
\le  x^\eta (n - \mu_x)^{-2} \sigma_x^{2} \\
&\le C n^{\eta - 1} = O(n^{\eta - \frac{1 + \delta}{2} }).
\end{split}
\end{align}
Moreover, using \eqref{eq:estim_1} and (A2) we see that
\begin{align}\label{eq:ee3}
\begin{split}
	x^\eta f_x(n) &=   x^\eta  ( 2 \pi \sigma_x^2 )^{-1/2} \exp\biggl( - \frac{(n - \mu_x)^2}{2\sigma_x^2} \biggr) \le  
	C x^{ \eta - \frac12 } \exp( - C' n ) \\
	&\le C  \exp( - C' n ) = O(n^{\eta - \frac{1 + \delta}{2} }).
\end{split}
\end{align}
for some constants $C, C' > 0$. Combining \eqref{eq:ee1}, \eqref{eq:ee2}, and \eqref{eq:ee3}
gives \eqref{eq:e1_estim}.
\end{proof}

\subsubsection{Estimate on $E_2(x,n)$}

\begin{lem}\label{lem:estim_e2} Assume that (A1) and (A2) hold with 
$\theta_i(x)$ as in \eqref{eq:theta_ass}:
\begin{align*}
\theta_i(x) = o( x^{-\eta}  ( \log x )^{-1/2} ), \quad i = 1,2,
\end{align*}
where $\eta \in [0, 1/2)$. Then,
\begin{align}\label{eq:e2_estim}
	\max_{1 \le x \le n} x^\eta | E_2(x, n) | = o(n^{-1/2}).
\end{align}
	
\end{lem}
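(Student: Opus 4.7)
The plan is to exploit that $f_x$ and $g_x$ are Gaussian densities sharing the same mean $\mu_x$ and differing only in their variances, so that $E_2(x,n)$ is a pure variance-perturbation of a Gaussian; I would control it pointwise by a mean-value argument and uniformly in $x$ by the Gaussian tail at $|n-\mu_x|$.

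First, using the identity
$$
\partial_s \phi_{\mu,s}(z) = \frac{1}{2s}\biggl(\frac{(z-\mu)^2}{s}-1\biggr)\phi_{\mu,s}(z)
$$
together with the elementary bound $(1+u)e^{-u/2}\le C e^{-u/4}$ for $u\ge 0$, I would obtain
$$
|\phi_{\mu_x,\sigma_1^2}(n) - \phi_{\mu_x,\sigma_2^2}(n)| \le C\,|\sigma_1^2-\sigma_2^2|\,\sup_{s}\, s^{-3/2}\,e^{-(n-\mu_x)^2/(4s)},
$$
where the supremum is over $s$ between $\sigma_1^2$ and $\sigma_2^2$. I would apply this with $\sigma_1^2=\sigma_x^2$ and $\sigma_2^2=\mathrm{Var}(g_x)$, which is asymptotic to $\sigma^2 x$.

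Second, (A1), (A2) and \eqref{eq:theta_ass} bound the variance gap:
$$
|\sigma_x^2 - \mathrm{Var}(g_x)| \le x\bigl(|\theta_2(x)| + C|\theta_1(x)|\bigr) = o\bigl(x^{1-\eta}(\log x)^{-1/2}\bigr),
$$
and for $x$ larger than some fixed $x_0$, both variances lie in an interval $[c_- x, c_+ x]$ with $c_\pm>0$; the right-hand side of the pointwise bound is therefore $O\bigl(x^{-3/2}e^{-c(n-\mu_x)^2/x}\bigr)$, giving
$$
x^\eta|E_2(x,n)| \le \psi(x)\,x^{-1/2}\,\exp\bigl(-c(n-\mu_x)^2/x\bigr), \qquad \psi(x) = o\bigl((\log x)^{-1/2}\bigr),
$$
for $x\ge x_0$.

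Third, to obtain the uniform estimate over $1\le x\le n$, I would split at $x_\star = \lfloor \varepsilon n\rfloor$ with small $\varepsilon\in(0,1/\mu)$. On $x_\star \le x \le n$, $x\asymp n$ and the Gaussian factor is at most $1$, so the above bound gives $x^\eta|E_2(x,n)| = o(n^{-1/2}(\log n)^{-1/2}) = o(n^{-1/2})$ uniformly. On $x_0 \le x < x_\star$, (A1) yields $\mu_x \le (1-\delta)n$ for some $\delta>0$ and $n$ large, hence $(n-\mu_x)^2/x \ge \delta^2 n/\varepsilon$, and the exponential factor beats any polynomial in $n$ uniformly in $x$. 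The residual range $1\le x<x_0$ is handled directly: there the parameters $\mu_x,\sigma_x^2,\mathrm{Var}(g_x)$ are all bounded, so $f_x(n)$ and $g_x(n)$ are themselves super-exponentially small in $n$ and the bounded prefactor $x^\eta\le x_0^\eta$ is irrelevant. The main obstacle is aligning the asymptotic rate $\theta_i(x) = o(x^{-\eta}(\log x)^{-1/2})$, valid only for large $x$, with a uniform-in-$x$ conclusion on $[1,n]$; the Gaussian tail in $|n-\mu_x|$ is the device that bridges the two regimes, and choosing $\varepsilon$ small enough so that $\mu x_\star$ stays bounded well below $n$ is the delicate quantitative point.
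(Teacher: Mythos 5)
Your overall strategy---treating $E_2$ as a pure variance perturbation of a Gaussian, controlling it pointwise by a mean-value bound and uniformly by the Gaussian tail in $|n-\mu_x|$, then splitting $[1,n]$ at $x_\star\asymp \varepsilon n$ and at a fixed $x_0$---is a legitimate alternative to the paper's argument, which instead splits according to whether $|n-\mu_x|\le u(n\log n)^{1/2}$ and bounds the Gaussian difference via the elementary inequality $|e^{-u^2}-e^{-v^2}|\le|u-v|$. However, your Step~2 has a genuine gap. The density $g_x(n)$ carries variance $n\sigma^2/\mu$ (the notation $\phi_{\mu_x,\sigma^2/\mu}$ is a per-unit-time shorthand, as the display $\exp(-(n-\mu_x)^2/(2n\sigma^2/\mu))$ in the paper's own estimate makes clear), and since $\sigma_x^2=\sigma^2 x+x\theta_2(x)$ while $\mu x=\mu_x-x\theta_1(x)$, the actual variance gap is
\begin{align*}
\sigma_x^2-\frac{n\sigma^2}{\mu}=-\frac{\sigma^2}{\mu}(n-\mu_x)-\frac{\sigma^2}{\mu}\,x\,\theta_1(x)+x\,\theta_2(x).
\end{align*}
Your stated bound $|\sigma_x^2-\mathrm{Var}(g_x)|\le x(|\theta_2(x)|+C|\theta_1(x)|)=o(x^{1-\eta}(\log x)^{-1/2})$ drops the $(n-\mu_x)$ term, which on $x_\star\le x\le n$ can be of order $n$ and is therefore not $o(x^{1-\eta}(\log x)^{-1/2})$. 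Consequently the intermediate estimate $x^\eta|E_2(x,n)|\le\psi(x)\,x^{-1/2}\exp(-c(n-\mu_x)^2/x)$ with $\psi(x)=o((\log x)^{-1/2})$ does not follow as written.

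The gap is repairable within your own scheme: inserting the full variance gap into your mean-value bound produces the extra contribution
\begin{align*}
C\,x^{\eta}\,x^{-3/2}\,|n-\mu_x|\exp\!\bigl(-c(n-\mu_x)^2/x\bigr)\;\le\; C\,x^{\eta-1}\sup_{t\ge 0}t\,e^{-ct^2}\;\le\; C\,x^{\eta-1},
\end{align*}
which on $x\asymp n$ is $O(n^{\eta-1})=o(n^{-1/2})$ precisely because $\eta<1/2$; this is exactly the role played by the $(n-\mu_x)/\sigma_x^2$ term that the paper's proof carries explicitly. But that piece must be estimated separately rather than absorbed into $\psi(x)$. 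A minor further inaccuracy: in the residual range $1\le x<x_0$, the term $g_x(n)$ has variance $n\sigma^2/\mu$ which grows with $n$, so $g_x(n)$ is only exponentially small in $n$, not super-exponentially---still $o(n^{-1/2})$, of course.
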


\begin{proof} We begin by decomposing
\begin{align*}
[1,n] \cap \bZ_+ &= \{  1 \le x \le n \: : \: |n - \mu_x| 
\le u ( n \log n )^{1/2}  \} \\
&\cup \{  1 \le x \le n \: : \: |n - \mu_x|  >  u ( n \log n )^{1/2}  \} =: I_{1,n} \cup I_{2,n},
\end{align*}
where $u > 1$ is a large constant whose value will be specified later.

First suppose that $x \in I_{1,n}$. Using the triangle inequality and 
$|e^{-x^2} - e^{-y^2}| \le |x - y|$, we have that
\begin{align*}
	n^{1/2} | E_2(x,n) | &= n^{1/2} | f_x(n) - g_x(n) | \\
&\le C | ( n / \sigma_x^2 )^{1/2}  - (\mu / \sigma^2)^{1/2}   | 
+ C 
%(\mu / \sigma^2)^{1/2} 
\biggl| 
\exp\biggl( - \frac{(n - \mu_x)^2}{2\sigma_x^2}  \biggr)
- 	\exp\biggl( - \frac{(n - \mu_x)^2}{2 n \sigma^2 / \mu}  \biggr)
\biggr| \\
&\le C | ( n / \sigma_x^2 )^{1/2}  - (\mu / \sigma^2)^{1/2}   |  
+ C \biggl|  (n / \sigma_x^2)^{1/2}  \frac{n - \mu_x}{ \sqrt{n} } - 
(\mu / \sigma^2)^{1/2}
\frac{n - \mu_x}{
	\sqrt{n} 
}  \biggr| \\
&\le C \biggl( 1 + \frac{|n - \mu_x|}{ \sqrt{n} } \biggr) 
|  ( n / \sigma_x^2 )^{1/2}  - (\mu / \sigma^2)^{1/2}  |.
\end{align*}
Thus, for any $n$ with $u ( \log n )^{1/2} \ge 1$,
\begin{align*}
	n^{1/2} | E_2(x,n) | 
	 &\le C u ( \log n )^{1/2}  |  ( n / \sigma_x^2 )^{1/2}  - (\mu / \sigma^2)^{1/2}  | \\
	 &\le C  u ( \log n )^{1/2}  (\sigma^2 / \mu)^{1/2} | n / \sigma_x^2 - \mu / \sigma^2 | 
	 \le C u ( \log n )^{1/2}   \biggl| 
	 \frac{n - \mu_x}{\sigma_x^2} + d(x)
	 \biggr| \\
	 &\le C u ( \log n )^{1/2}  ( x^{-1} (n \log (n) )^{1/2} + | d(x) | ),
\end{align*}
where (A2) was used in the last inequality, and 
$$
d(x) = \frac{\mu_x}{\sigma_x^2} - \frac{\mu}{\sigma^2} =  \frac{\mu_x / x}{\sigma_x^2 / x} - \frac{\mu}{\sigma^2}.
$$
Since (A1) and (A2) are satisfied with
$\theta_i(x) = o( x^{-\eta}  ( \log x )^{-1/2} )$, $i = 1,2$,
 it follows that $d(x) = o(x^{-\eta} (\log x)^{-1/2})$. Further,
 for sufficiently large $n$ we have $n - u ( n \log n )^{1/2} \ge n / 2$, which yields
\begin{align}\label{eq:aux_x_n}
	x = ( \mu_x / x )^{-1} ( n - (n - \mu_x) ) \ge (2 \mu_+)^{-1} n.
\end{align}
Consequently, as $n \to \infty$,
$$
n^{1/2} \max_{x \in I_{1,n}} x^\eta  |E_2(x,n)| \le C ( \log n )^{1/2}  (  n^{\eta-1/2} ( \log n )^{ 1 / 2}   + \max_{1 \le x \le n}  x^\eta |d(x) |  ) = o(1).
$$

On the other hand, using (A2) and the definition of $I_{2,n}$,
\begin{align*}
	&n^{1/2} \max_{x \in I_{2,n}}  x^\eta |E_2(x,n)| \le n^{1/2} \max_{x \in I_{2,n}} x^\eta  |f_x(n)| + n^{1/2} \max_{x \in I_{2,n}}  x^\eta |g_x(n)|
	\\
	&\le C n^{1/2}  \max_{x \in I_{2,n}} x^{\eta - 1/2}  \exp \biggl( 
	 - \frac{(n - \mu_x)^2}{2\sigma_x^2} 
	\biggr)
	+ C  \max_{x \in I_{2,n}} x^{\eta}  \exp \biggl( 
	- \frac{(n - \mu_x)^2}{2 n \sigma^2 / \mu } 
	\biggr) \\
	&\le C n^{\eta - u^2 C' } 
\end{align*}
where $C' > 0$ is a constant independent of $u$. The desired estimate 
follows by choosing $u > (\eta / C' )^{  1/2 }$.
\end{proof}

\subsubsection{Estimate on $E_3(x,n)$}

\begin{lem}\label{lem:estim_e3} Assume (A1) 
	with $\theta_1(x) = o(1)$
	and \eqref{eq:L_ass} with $\eta \in [0, 1/2)$. Then, as $n \to \infty$,
	\begin{align}\label{eq:e3_estim}
		\max_{1 \le x \le n} x^\eta | E_3(x, n) | = o(n^{-1/2}),
	\end{align}
and, for some constant $C > 0$,
\begin{align}\label{eq:h_tails}
	\sup_{x > n} x^\eta h_n(x) = O( e^{-Cn} ).
\end{align}
\end{lem}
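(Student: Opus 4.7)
The plan is to compare $g_x(n)$ and $\mu^{-1} h_n(x)$ directly as Gaussian densities, writing their ratio as an exponential and estimating the difference of the exponents. After the factor $\mu^{-1}$ is absorbed into the normalization of $h_n$, both densities carry the same $n$-dependent leading constant $(2\pi n\sigma^2/\mu)^{-1/2}$, so
\[
\mu^{-1} h_n(x) \;=\; g_x(n)\,\exp(\alpha(x,n)),
\qquad
\alpha(x,n) \;=\; \frac{\mu}{2n\sigma^2}\, A B,
\]
with $A = (n-\mu_x)-\mu(M_n-x)$ and $B = (n-\mu_x)+\mu(M_n-x)$. The point is that $A$ measures exactly the failure of the heuristic identity $n-\mu_x\approx\mu(M_n-x)$ suggested by $\mu_y\approx\mu y$.

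The main quantitative step is to bound $|A|$. Writing $n=\mu_{M_n}-\delta$ with $\delta\in[0,m_{M_n-1}]$ from the definition of $M_n$, and taking $x\le M_n$ (the other case is analogous),
\[
A \;=\; \sum_{w=x}^{M_n-1}(m_w-\mu) \;-\; \delta.
\]
By (B2), $\delta\le m_{M_n-1}\le CA_{M_n-1}=O(n^\eta)$. By (B3), on the range $|x-M_n|\le u'\, b(x)$ the partial sum is controlled by $L(x;u')=o(n^{1/2-\eta})$. Hence $|A|\le o(n^{1/2-\eta})+O(n^\eta)$ uniformly on that range.

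Next I split $\{1\le x\le n\}$ at $|x-M_n|=u\sqrt n$ for a large constant $u$. In the moderate regime $|x-M_n|\le u\sqrt n$, $|B|\le 2\mu u\sqrt n+|A|$, so $|\alpha|$ is bounded; the inequality $|e^\alpha-1|\le C|\alpha|$ combined with the Gaussian envelope $h_n(x)\le Cn^{-1/2}\exp(-(x-M_n)^2/(2n\tilde\sigma^2))$ yields $x^\eta|E_3(x,n)|=o(n^{-1/2})$, where the exponential damping is used to absorb the $|x-M_n|$-dependent factors entering through $B$. In the tail regime $|x-M_n|>u\sqrt n$, $h_n(x)\le Cn^{-1/2}e^{-cu^2}$, and since $|n-\mu_x|\ge\mu u\sqrt n-|A|\gtrsim u\sqrt n$ for $u$ large, the same holds for $g_x(n)$; choosing $u$ sufficiently large, both contributions are $o(n^{-1/2})$ uniformly in $x$.

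The second claim is a direct Gaussian tail estimate: under (A1) with $\mu>1$, for large $n$ one has $M_n\le 2n/(\mu+1)$, so for $x>n$,
\[
x-M_n \;\ge\; cn + (x-n), \qquad c=\tfrac{\mu-1}{\mu+1}>0.
\]
The factor $\exp(-(x-M_n)^2/(2n\tilde\sigma^2))$ then contains $\exp(-c^2 n/(2\tilde\sigma^2))$ together with an exponential in $x-n$ that swallows the polynomial $x^\eta$, giving $\sup_{x>n}x^\eta h_n(x)=O(e^{-Cn})$. The most delicate point throughout is the moderate-regime estimate of $|\alpha|$: the term $|A|^2/n$ is of order $n^{2\eta-1}$, so obtaining $o(n^{-1/2})$ uniformly in $x$ requires combining the bound on $\delta$ from (B2), the bound on the partial sum from (B3), and the Gaussian decay of $h_n$ off of $M_n$ with some care.
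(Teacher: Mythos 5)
Your structural setup is sound and matches the paper in spirit: writing $\mu^{-1}h_n(x)=g_x(n)e^{\alpha(x,n)}$ with $\alpha=\frac{\mu}{2n\sigma^2}AB$, $A=(n-\mu_x)-\mu(M_n-x)$, $B=(n-\mu_x)+\mu(M_n-x)$, is the same factorization $a^2-b^2=(a-b)(a+b)$ the paper uses implicitly through $|e^{-a^2}-e^{-b^2}|\le|a-b|$. The second claim \eqref{eq:h_tails} is also handled correctly. However, there are two genuine quantitative gaps in the treatment of \eqref{eq:e3_estim}.

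First, your bound $\delta\le m_{M_n-1}\le CA_{M_n-1}=O(n^\eta)$ (via (B2), which in fact is not even a hypothesis of the lemma as stated) is too lossy. It yields $|A|=o(n^{1/2-\eta})+O(n^\eta)$, and the resulting estimate $x^\eta|E_3(x,n)|\lesssim n^{\eta}\cdot n^{-1/2}\cdot\tfrac{|A|(|x-M_n|+|A|)}{n}$ produces terms of order $n^{2\eta-1}$ and $n^{3\eta-3/2}$ from the $O(n^\eta)$ part of $|A|$; these are $o(n^{-1/2})$ only when $\eta<1/4$ (resp.\ $\eta<1/3$). The Gaussian decay does not rescue the $|A|^2/n$ contribution, since the exponential factor is $\asymp 1$ precisely when $|x-M_n|\lesssim|A|$. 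The paper avoids this entirely by writing $m_{M_n-1}\le|m_{M_n-1}-\mu|+\mu$ and absorbing $|m_{M_n-1}-\mu|$ into the same supremum controlled by $L(M_n;u)$, giving $|A|=o(n^{1/2-\eta})+O(1)$ uniformly in the moderate regime, which then works for the whole range $\eta\in[0,1/2)$.

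Second, your split at $|x-M_n|=u\sqrt{n}$ with $u$ a fixed constant cannot give $o(n^{-1/2})$ in the tail regime. At the boundary the Gaussian factor equals $e^{-cu^2}$, a constant, so the tail contribution is bounded only by $Cn^{\eta-1/2}e^{-cu^2}$; this is $\Theta(n^{-1/2})$ when $\eta=0$ and grows relative to $n^{-1/2}$ when $\eta>0$, no matter how large $u$ is. The paper splits at $|x-M_n|>u\,b(M_n)$ with $b(x)=(x\log x)^{1/2}$; the extra $\log$ turns the Gaussian bound into $\exp(-c'u^2\log M_n)\le Cn^{-c'u^2}$, which beats $n^{\eta}$ once $u$ is chosen large enough. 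The $(\log x)^{1/2}$ in the definition of $b$ and the matching $(\log x)^{-1/2}$ in (B1) and \eqref{eq:L_ass} are precisely tuned for this step; replacing $b(M_n)$ by $\sqrt{n}$ throws that tuning away.
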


\begin{proof} Decompose
\begin{align*}
[1,n] \cap \bZ_+ &= \{  1 \le x \le n \: : \: |x - M_n| \le u b(M_n)  \} \cup 
 \{  1 \le x \le n \: : \: |x - M_n| > u b(M_n)  \} \\
 &=: J_{1,n} \cup J_{2,n},
\end{align*}
where $u \ge 2$ is a large constant to be specified later and $b(x) = (x \log x)^{1/2}$ is defined as in \eqref{eq:L_def}. Recall from \eqref{eq:Mn_lim} that $n^{-1} M_n \to \mu^{-1}$ as $n \to \infty$. 
Therefore, there exists $n_0$ such that for all $n \ge n_0$, 
$$
u b(M_n) \ge 2 \quad \text{and} \quad M_n \ge 2^{-1} \mu^{-1} n.
$$

For $x \in J_{1,n}$, using $|e^{-x^2} - e^{-y^2}| \le |x - y|$ we obtain
\begin{align*}
|E_3(x,n)| &= |g_x(n) - \mu^{-1} h_n(x) | \le C n^{-1/2} |(n - \mu_x) - \mu (M_n - x) |.
\end{align*}
Next, write 
\begin{align*}
(n - \mu_x) - \mu (M_n - x)  &= ( \mu_{M_n} - \mu_x ) - \mu(M_n - x) + (n - \mu_{M_n})  
\\
&= \sum_{w=x}^{M_n - 1} (m_w - \mu) + (n - \mu_{M_n}),
\end{align*}
with the convention that $\sum_{w=x}^{M_n - 1} (m_w - \mu) = - \sum_{w=M_n}^{x - 1} (m_w - \mu)$ 
if $x \ge M_n$. Note that, by the definition of $M_n$,
$|  n - \mu_{M_n} | = \mu_{M_n} - n \le m_{M_n - 1}$. Consequently, for $n \ge n_0$, we have 
\begin{align*}
&|(n - \mu_x) - \mu (M_n - x) |  \\
&\le \biggl| \sum_{w=x}^{M_n - 1} ( m_w - \mu )   \biggr| + m_{ M_n - 1 } 
\le 2 \max_{x' : |x' - M_n| \le u b(M_n)} \biggl| \sum_{w=x'}^{M_n - 1} ( m_w - \mu )   \biggr|  + \mu \\
&\le 2 \max_{\ell : |\ell | \le u b(M_n)} \biggl| \sum_{  w \in [ M_n ]_{  \ell  } } ( m_w - \mu )   \biggr|  + \mu = 
2 L(M_n; u) + \mu.
\end{align*}
where $[x]_\ell$ is defined as in \eqref{eq:L_def}.
Now, using \eqref{eq:L_ass} and \eqref{eq:Mn_lim} we obtain
$$
\max_{x \in J_{1,n}}  |(n - \mu_x) - \mu (M_n - x) | = O( L( M_n; u ) + \mu ) = o(n^{ 1/2 - \eta }), \quad 
\text{as $n \to \infty$.}
$$
Consequently, as $n \to \infty$,
\begin{align*}
n^{1/2}	\max_{x \in J_{1,n}} x^\eta |E_3(x,n)| = o(1).
\end{align*}	

Next, we consider $x \in J_{2,n}$. For $n \ge n_0$ we have $ 2^{-1} |x - M_n| \ge 1$.
Since $\inf_{x} ( \mu_{x+1} - \mu_x ) \ge 1$
and $M_n = (\mu_x)^{\leftarrow}(n)$, it 
follows from \cite[equation (A.2)]{LS11} that, whenever $n \ge n_0$,
$$
|\mu_x - n| \ge |x - M_n| - 1 \ge 2^{-1}|x - M_n|.
$$
Further, for $n \ge n_0$, we use $M_n \ge 2^{-1} \mu^{-1} n$ and $|x - M_n| > u b(M_n)$
to obtain
\begin{align*}
	n^{1/2}|E_3(x,n)| &\le C  \biggl( \exp \biggl(  - \frac{(n - \mu_x)^2}{ 2 n \sigma^2 / \mu } \biggr) 
	+ \exp \biggl(  - \frac{(x - M_n)^2}{ 2 n \tilde{\sigma}^2 } \biggr)  
	  \biggr) \\
	  &\le C \exp \biggl(  -  C' u^2 \frac{ M_n \log M_n  }{ n  } \biggr) 	  
	  \le C M_n^{- C' u^2 \frac12 \mu^{-1} } \le C n^{ - C' u^2 \frac12 \mu^{-1}  },
\end{align*}
for some constants $C, C ' > 0$ independent of $u$. Hence, for $u > 2 + \sqrt{ 2 \eta \mu / C' }$,
\begin{align*}
	n^{1/2}	\max_{x \in J_{2,n}} x^\eta |E_3(x,n)| = O( n^{  \eta - C' u^2 \frac12 \mu^{-1}   } ) = o(1), \quad 
	\text{as $n \to \infty$.}
\end{align*}	
This completes the proof of \eqref{eq:e3_estim}.

It remains to show \eqref{eq:h_tails}. Since
$M_n / n \to \mu^{-1} < 1$, there exist $n_1$ and $c \in (0,1)$ such that, 
whenever $x > n \ge n_1$, we have
$$
x - M_n \ge (1 - c)x.
$$
For such $n$ and $x$,
$$
x^\eta h_n(x) \le C x^\eta  n^{-1/2} 
\exp\biggl( - \frac{(x - M_n)^2}{2 n  \tilde{\sigma}^2 }  \biggr) 
\le C x^\eta  n^{-1/2}  e^{ - C' x }
$$
holds for some constants $C, C' >0$, which implies \eqref{eq:h_tails}.
\end{proof}

\subsubsection{Completing the proof of Theorem \ref{thm:llt}}
Assume that (B1)-(B4) in Theorem \ref{thm:llt} hold with
$$
\beta_* > 2 \quad \text{and} \quad 
0 \le \eta < \min \biggl\{  \frac12, \frac{\beta_* - 2}{2}   \biggr\}.
$$
Then there exists $\delta \in (0, \min \{ \beta_* - 2, 1 \} )$ such that $\eta - (1 + \delta) / 2 < -1/2$. 
Fix such $\delta$. Combining the decomposition \eqref{eq:decomp} with the estimates 
\eqref{eq:e1_estim}, \eqref{eq:e2_estim}, and \eqref{eq:e3_estim}, we obtain
\begin{align*}
	\max_{1 \le x \le n} x^\eta \biggl| P_\omega(T_x = n) - \mu^{-1} h_n(x) \biggr| = 
	O( n^{ \eta - \frac{1+\delta}{2} } )
	+
	o(n^{-1/2}) = o(n^{-1/2}), \quad \text{as $n \to \infty$.}
\end{align*}
Since $P_\omega(T_x = n) = 0$ for $x > n$, we conclude from \eqref{eq:h_tails} that
\begin{align*}
	\sup_{x \ge 1} x^\eta \biggl| P_\omega(T_x = n) - \mu^{-1} h_n(x) \biggr| =
	o(n^{-1/2}), \quad \text{as $n \to \infty$.}
\end{align*}
Combining this with \eqref{eq:rel_x_t} gives
\begin{align*}
	&\biggl|  P_\omega(X_n = x) - \mu^{-1} \sum_{\ell = 1}^n h_\ell(x) \omega_{n - \ell}^x  \biggr| 
	\le \sum_{\ell=1}^n \biggl| 
	P_\omega( T_x = \ell ) - \mu^{-1} h_\ell(x) 
	\biggr| \omega_{n - \ell}^x 
	\\
	&\le C  \hat{\theta}(n) 
	+ C x^{-\eta} A_x \sum_{\ell = 1}^{n-1} (n - \ell)^{ - \beta_* }
	\hat{\theta}(\ell),
\end{align*}
where $\hat{\theta}(n) = o(n^{-1/2})$. Recall that  $x^{-\eta} A_x = O(1)$ by \eqref{eq:ass_A}.
Moreover, as $n \to \infty$,
\begin{align*}
	 \sum_{\ell = 1}^{n-1} (n - \ell)^{ - \beta_* }
	\hat{\theta}(\ell) = o(n^{-1/2}).
\end{align*}
Therefore, as $n \to \infty$,
\begin{align*}
\biggl|  P_\omega(X_n = x) - \mu^{-1} \sum_{\ell = 1}^n h_\ell(x) \omega_{n - \ell}^x  \biggr| 
= o(n^{-1/2}),
\end{align*}
which proves \eqref{eq:local_limit}.

\subsection{Proofs of Theorems \ref{thm:clt_random} and \ref{thm:llt_random}} 
Recall that $(\omega_n^x)$ is 
defined by \eqref{eq:random_env}. For fixed $\omega \in \Omega$, we 
write $\beta(x) = \bar{\beta}(\Theta^x \omega)$, so that $A_x,A_x'$ in Theorem 
\ref{thm:llt} are defined by
\begin{align}\label{eq:ax_random}
	A_x &= \bar{A}(\Theta^x \omega) \quad \text{and} \quad 
	A_x' = \bar{A}'(\Theta^x \omega).
\end{align}
Recall also the definitions of $m_x$ and $s_x$ from \eqref{eq:m_x} and \eqref{eq:rel_sx_ax}.

  \begin{proof}[Proof of Theorem \ref{thm:clt_random}] \textbf{(i)} Since the random sequence $(A_x)$ is  stationary and by \eqref{eq:A_moment_slln}
  satisfies $\bE[  A_0^q ] < \infty$,
  	it follows by applying the Borel--Cantelli lemma (see
  	\cite[Lemma 4.1]{LS11}) that $A_x = o(  x^\lambda )$ for $\bP$-a.e. $\omega \in \Omega$
  	with $\lambda = q^{-1}$. Moreover, Birkhoff's ergodic implies that, for $\bP$-a.e. $\omega \in \Omega$,
  		\begin{align*}
  		\frac{1}{x} \sum_{w=0}^{x-1} m_w \to \bE[m_0], \quad \text{as $x \to \infty$,}
  	\end{align*}
  	where $\bE[m_0] > 1$. Hence the assumptions of Theorem \ref{thm:slln} are satisfied for 
  	$\bP$-a.e.\ $\omega \in \Omega$, and Theorem \ref{thm:clt_random}-(i) follows.
  	
  	\noindent\textbf{(ii)} Similar to the proof of (i), as $\bE[  A_0^2 ] < \infty$, we have 
  	 that $A_x = o(  \sqrt{x} )$ for $\bP$-a.e. $\omega \in \Omega$.
  	 Applying Birkhoff’s ergodic theorem yields, for $\bP$-a.e.\ $\omega \in \Omega$,
  	\begin{align*}
  		\sum_{w=0}^{x-1} A_w^2 = O(x) \quad \text{and} \quad \frac{1}{x} \sum_{w=0}^{x-1} s_x^2 \to \bE[ s_0^2 ],
  	\end{align*}
  	as $x \to \infty$, 
  	where $\bE[ s_0^2 ] > 0$. Consequently, the assumptions of Theorem \ref{thm:clt} are satisfied for 
  	$\bP$-a.e. $\omega \in \Omega$, and Theorem \ref{thm:clt_random}-(ii) follows.
  	\end{proof}

The proof of Theorem \ref{thm:llt_random} uses the following auxiliary result, which 
is a consequence of 
a maximal moment inequality 
due to Yang \cite{Y07}.

\begin{lem}\label{lem:yang_appli} Let $(\xi_n)_{n \ge 0}$ be a stationary $\alpha$-mixing sequence with
$$
\bE[ | \xi_0 |^q ] < \infty, \quad \text{$q = r + \delta$, $r > 2$, $\delta > 0$},
$$
and 
$$
\alpha(n) = O(n^{-\theta}), \quad \theta > r (r + \delta) / (2 \delta).
$$
Then, for any $u > 0$, as $k \to \infty$,
\begin{align}\label{eq:appli_yang}
	\max_{1 \le j \le u b(k)} \biggl| 
	\sum_{\ell = k + 1}^{k + j} ( \xi_\ell - \bE[ \xi_0 ] )
	\biggr| = o(k^p),
\end{align}
almost surely, where $b(k) = ( k \log k )^{1/2}$,
provided that
\begin{align}\label{eq:cond_p}
p > \frac{1}{r} + \frac{1}{4}.
\end{align}
\end{lem}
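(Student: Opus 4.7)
The plan is to apply Yang's maximal moment inequality \cite{Y07}, followed by Markov's inequality and the Borel--Cantelli lemma. By stationarity, the shifted partial sums $\sum_{\ell = k+1}^{k+j} (\xi_\ell - \bE[\xi_0])$ have the same joint distribution as $\sum_{\ell = 1}^{j} (\xi_\ell - \bE[\xi_0])$, so it suffices to control
$$
W_N := \max_{1 \le j \le N} \biggl| \sum_{\ell=1}^j (\xi_\ell - \bE[\xi_0]) \biggr|,
\qquad N = N(k) := \lceil u \, b(k) \rceil.
$$

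First I invoke Yang's inequality in its $\alpha$-mixing Rosenthal-type form, which asserts that under $\bE|\xi_0|^{r+\delta} < \infty$ and
$\sum_{n \ge 1} n^{r/2 - 1} \alpha(n)^{\delta/(r+\delta)} < \infty$, one has $\bE[W_N^r] \le C N^{r/2}$ with $C$ independent of $N$. The hypothesis $\alpha(n) = O(n^{-\theta})$ with $\theta > r(r+\delta)/(2\delta)$ gives $\theta \delta/(r+\delta) > r/2$, so the required sum converges and the inequality applies. Substituting $N = N(k)$ yields $\bE[W_{N(k)}^r] \le C(u) (k \log k)^{r/4}$.

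Next I apply Markov's inequality: for every $\ve > 0$,
$$
\bP\bigl( W_{N(k)} \ge \ve k^p \bigr) \le \ve^{-r} k^{-pr} \, \bE[W_{N(k)}^r]
\le C(u) \, \ve^{-r} \, (\log k)^{r/4} \, k^{r/4 - pr}.
$$
The assumption \eqref{eq:cond_p}, namely $p > 1/r + 1/4$, is equivalent to $pr - r/4 > 1$, so the right-hand side is summable over $k$. The Borel--Cantelli lemma then yields $\limsup_k k^{-p} W_{N(k)} \le \ve$ almost surely; since $\ve > 0$ is arbitrary, \eqref{eq:appli_yang} follows.

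The main point of care is citing the correct version of Yang's maximal inequality and matching its mixing-rate requirement to the present hypothesis. Specifically, one must verify that $\theta > r(r+\delta)/(2\delta)$ is sufficient to guarantee $\sum n^{r/2-1} \alpha(n)^{\delta/(r+\delta)} < \infty$, which is immediate from $\alpha(n) = O(n^{-\theta})$. Beyond this bookkeeping, the rest is a routine application of Markov and Borel--Cantelli, with the threshold $p > 1/4 + 1/r$ emerging precisely from the condition that the tail series $\sum_k (\log k)^{r/4} k^{r/4 - pr}$ converges.
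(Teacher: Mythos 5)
Your proof is correct and follows essentially the same route as the paper's: invoke Yang's maximal moment inequality, apply Markov's inequality, and conclude via Borel--Cantelli, with the threshold $p > 1/4 + 1/r$ emerging exactly from summability of the tail series. The only superficial difference is that you cite Yang's bound in the already-reduced form $\bE[W_N^r] \le C N^{r/2}$, whereas the paper quotes the two-term bound from \cite[Theorem 2.2]{Y07} (involving $(k\log k)^{\ve/2+1/2}$ and $(k\log k)^{r/4}$) and then observes that the second term dominates once $\ve$ is small enough, because $r>2$ forces $r/4 > 1/2$; your version absorbs this bookkeeping step into the citation.
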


\begin{proof} Denote by $V_k$ the quantity on the left hand side of \eqref{eq:appli_yang}.
By the Borel--Cantelli lemma, it suffices to show that, for any $\ve > 0$,
\begin{align}\label{eq:bc_ineq}
	\sum_{k=1}^\infty \bP(  k^{ -p } V_k > \ve ) < \infty.
\end{align}
Let $S_j = \sum_{\ell = 0}^{j-1} ( \xi_\ell - \bE[ \xi_\ell ] )$.
For $k \ge 1$, by stationarity of $(\xi_n)$ and Markov's inequality,
\begin{align*}
	\bP(  k^{ -p } V_k > \ve ) =  \bP \biggl(  
	\max_{1 \le j \le u b(k)} |S_j| > \ve k^p
	\biggr) \le \ve^{-r}  k^{-pr} \bE \biggl[  \max_{1 \le j \le u b(k)} |S_j|^r   \biggr].
\end{align*}
Applying the maximal moment inequality for $\alpha$-mixing sequences from
\cite[Theorem 2.2]{Y07}, we obtain for arbitrary $\ve' > 0$
the upper bound 
\begin{align*}
	\bE \biggl[  \max_{1 \le j \le u b(k)} |S_j|^r   \biggr] \le K_{\ve',\delta, r} (
	[ k \log (k) ]^{ \frac{\ve'}{2} + \frac12  }
	+ [k \log(k)]^{ \frac{r}{4} }
	 ),
\end{align*}
where $K_{\ve',\delta, r} \in (0, \infty)$ is a constant. Consequently, \eqref{eq:bc_ineq} holds if 
\begin{align}\label{eq:cond_yang}
p > \frac{3}{2r} + \frac{\ve'}{2r}  \quad \text{and} \quad 
p > \frac{1}{r} + \frac{1}{4}.
\end{align}
Since $r > 2$, we have
$$
 \frac{3}{2r} + \frac{\ve'}{2r} < \frac{1}{r} + \frac{1}{4}
$$
whenever $\ve'$ is sufficiently small. Choosing such $\ve'$, 
\eqref{eq:cond_yang} 
reduces to
\eqref{eq:cond_p}, which completes the proof.
\end{proof}

\begin{proof}[Proof of Theorem \ref{thm:llt_random}] 
For $\omega \in \Omega$, we define $A_x, A_x'$ as in \eqref{eq:ax_random}. Further, we write
$$
B_x = 
\begin{cases}
	\bar{A}(\Theta^x \omega ), &\text{if $\bar{\beta}_* > 3$} \\
	\bar{A}'(\Theta^x \omega ) , &\text{if $\bar{\beta}_* \in (2,3]$}.
\end{cases}
$$
We will verify that (B1)-(B4) in Theorem \ref{thm:llt} are satisfied 
for $\bP$-a.e. $\omega \in \Omega$ with 
$\eta = 1/q$. Since we assume that
$$
q > \max \biggl\{   8,  \frac{2}{\beta_* - 2}   \biggr\},
$$
this choice of $\eta$ ensures that
$
0 \le \eta < \min \{1/2, (\beta_* - 2)/2 \},
$
and hence Theorem~\ref{thm:llt} applies.
This will imply Theorem~\ref{thm:llt_random}.

\noindent\textbf{(B1)}: First, recall from \eqref{eq:tail_tau} and \eqref{eq:rel_sx_ax} that
that $m_x \le C A_x$ and $s_x^2 \le C A_x$. As a consequence of (C1), 
the stationary random sequences $(m_x)$ and $(s_x^2)$ satisfy
$$
\bE[  m_0^q ] < \infty \quad \text{and} \quad \bE[  ( s_0^2 )^q ] < \infty.
$$
Note that, as $q > 8$, we have 
$$
\sum_{k=1}^\infty k^{ 1 / ( q/2 -  1  )   } \alpha(k) \le C \sum_{k=1}^\infty k^{ \frac{2}{q - 2}  - \frac{2q}{q-8}  } < \infty.
$$
It follows from the almost sure invariance principle for $\alpha$-mixing sequences 
in \cite[Theorem 2]{R95} combined with the 
moment condition in \cite[equation (1.4)]{R95}
that, for $\bP$-a.e. $\omega \in \Omega$,
\begin{align*}
	\frac{1}{x} \sum_{w=0}^{x-1} m_w = \mu + \theta_1(x)  \quad \text{and} \quad 
	\frac{1}{x} \sum_{w=0}^{x-1} s^2_w  = \sigma^2 + \theta_2(x)
\end{align*}
hold with $\theta_i(x) = O(x^{-1/2} \sqrt{ \log \log (x) } )$, 
$\mu = \bE[  m_0 ] > 1$ and $\sigma^2 = \bE[ s_0^2  ] > 0$.
In particular, (B1) holds for $\bP$-a.e. $\omega \in \Omega$
with $\eta = 1/q$.

\noindent\textbf{(B2)}: 
Since $\bE[ A_0^q ] < \infty$ by (C1) and \eqref{eq:rel_ax_ax'}, 
it follows from \cite[Lemma 4.1]{LS11} that 
$A_x = o(  x^{  1 / q } )$ for $\bP$-a.e. $\omega \in \Omega$. Moreover, 
it follows by (C1), (C2), and the strong law of large numbers in \cite[Corollary 1]{S93}
applied to $(B_x^4)_{x \ge 0}$ that, as $x \to \infty$,
$$
x^{-1} \sum_{w=0}^{x-1} B_w^{4} = O(1) \quad \text{for $\bP$-a.e. $\omega \in \Omega$.}
$$
Consequently, (B2) holds for $\bP$-a.e. $\omega \in \Omega$ with $\eta = 1/q$ and $\ve = 1$.

\noindent\textbf{(B3):}
Recall the definition of $L(x;u)$ in \eqref{eq:L_def} and that $\bE[ m_0^q ] < \infty$. 
Applying Lemma 
\ref{lem:yang_appli} with $p = 1/2 - \eta = 1/2 - 1/q$, we obtain
\begin{align*}
	L(x; u) = o(  x^{1/2-\eta}  ),
\end{align*}
for $\bP$-a.e. $\omega \in \Omega$, whenever
\begin{align*}
	\alpha(n) &= O( n^{-v} ) \quad \text{with $v > \frac{(q - \delta)q}{2\delta}$, $q - \delta > 2$,} \\
	\frac12 - \eta &> \frac{1}{q-\delta} + \frac14,
\end{align*}
where $\delta > 0$. This is equivalent with
\begin{align*}
	&\alpha(n) = O( n^{-v} ) \quad \text{with $v > \frac{(q - \delta)q}{2\delta}$,} \quad
	0 < \delta < \frac{q(q-8)}{q-4},
\end{align*}
which can be satisfied by choosing $\delta$ sufficiently close to the upper bound
$q(q - 8)/(q-4)$, provided that
\begin{align*}
\alpha(n) &= O( n^{-v} ) \quad \text{with $v > \frac{2q}{q-8}$}.
\end{align*}
Hence, (B3) holds for $\bP$-a.e. $\omega \in \Omega$ under (C2).

\noindent\textbf{(B4):} Under (C2) and (C3), \cite[Corollary 1]{S93} implies that $K_x = \bar{K}(\Theta^x \omega )$ satisfies
$$
\frac{1}{x} \sum_{w=0}^{x-1} K_w = O(1), \quad \text{as $x \to \infty$,}
$$
for $\bP$-a.e. $\omega \in \Omega$. Hence, (B4) is satisfied.
\end{proof}

	%\bigskip
	%\bigskip
	\bibliography{rwre}{}
	\bibliographystyle{plainurl}

\end{document}